\DeclareMathOperator{\Bl}{Bl}
\DeclareMathOperator{\Spec}{Spec}
\DeclareMathOperator{\Frac}{Frac}
\DeclareMathOperator{\Pic}{Pic}
\DeclareMathOperator{\Cl}{Cl}
\DeclareMathOperator{\SL}{SL}
\DeclareMathOperator{\Fix}{Fix}
\DeclareMathOperator{\Image}{Im}
\DeclareMathOperator{\Ann}{Ann}
\DeclareMathOperator{\Sing}{Sing}
\DeclareMathOperator{\divisor}{div}
\DeclareMathOperator{\codim}{codim}
\DeclareMathOperator{\charac}{char}
\DeclareMathOperator{\Ker}{Ker}
\DeclareMathOperator{\Gal}{Gal}
\DeclareMathOperator{\Dih}{Dih}
\DeclareMathOperator{\Tet}{Tet}
\DeclareMathOperator{\Oct}{Oct}
\DeclareMathOperator{\Ico}{Ico}
\newcommand{\BinDih}{\widetilde \Dih}
\newcommand{\BinTet}{\widetilde \Tet}
\newcommand{\BinOct}{\widetilde \Oct}
\newcommand{\BinIco}{\widetilde \Ico}
\newcommand{\id}{\mathrm{id}}
\newcommand{\sm}{\mathrm{sm}}
\newcommand{\actson}{\curvearrowright}
\newcommand{\from}{\leftarrow}
\newcommand{\divides}{\mid}
\newcommand{\notdivides}{\nmid} 
\newcommand{\namedto}[1]{\xrightarrow{#1}}
\newcommand{\thpower}[2]{{#1}^{(#2)}}
\newcommand{\pthpower}[1]{\thpower{#1}{p}}
\newcommand{\secondpower}[1]{\thpower{#1}{2}}
\newcommand{\divisorialfix}[1]{(#1)}
\newcommand{\isolatedfix}[1]{\langle #1 \rangle}
\newcommand{\set}[1]{\{#1\}}
\newcommand{\positive}[1]{(#1)^{+}}
\newcommand{\partialdd}[1]{\frac{\partial}{\partial #1}}
\newcommand{\restrictedto}[1]{\rvert_{#1}}
\newcommand{\abs}[1]{\lvert #1 \rvert}
\newcommand{\spanned}[1]{\langle #1 \rangle}
\newcommand{\smpoint}{A_0}
\newcommand{\C}{\mathrm{C}}
\newcommand{\pietloc}{\pi^{\mathrm{et}}_{\mathrm{loc}}}
\newcommand{\piet}{\pi^{\mathrm{et}}}
\newcommand{\idealm}{\mathfrak{m}}
\newcommand{\idealn}{\mathfrak{n}}
\newcommand{\cO}{\mathcal O}
\newcommand{\bZ}{\mathbb Z}
\newcommand{\bF}{\mathbb F}
\newcommand{\bP}{\mathbb P}
\newcommand{\map}[4][\to]{#2 \colon #3 #1 #4}
 \theoremstyle{plain}
 \newtheorem{thm}{Theorem}[section]
 \newtheorem{lem}[thm]{Lemma}
 \newtheorem{prop}[thm]{Proposition}
 \newtheorem{cor}[thm]{Corollary}
 \newtheorem{claim}[thm]{Claim}
 \theoremstyle{definition}
 \newtheorem{rem}[thm]{Remark}
 \newtheorem{defn}[thm]{Definition}
 \newtheorem{example}[thm]{Example}
 \newtheorem{conv}[thm]{Convention}
\title
[Purely inseparable coverings of RDPs in positive characteristic]
{Purely inseparable coverings of rational double points in positive characteristic}
\author{Yuya Matsumoto}
\date{2022/04/03}
\address{Department of Mathematics, Faculty of Science and Technology, Tokyo University of Science, 2641 Yamazaki, Noda, Chiba, 278-8510, Japan}
\email{\url{matsumoto_yuya@ma.noda.tus.ac.jp}}
\email{\url{matsumoto.yuya.m@gmail.com}}
\subjclass[2010]{Primary 14J17; Secondary 14L30}
\thanks{This work was supported by JSPS KAKENHI Grant Number 16K17560 and 20K14296.}
\begin{document}

\begin{abstract}
We classify purely inseparable morphisms of degree $p$
between rational double points (RDPs) in characteristic $p > 0$.
Using such morphisms, we refine a result of Artin that any RDP admits a finite smooth covering.
\end{abstract}

\maketitle

\section{Introduction}

Rational double points, RDPs for short, 
are perhaps the most basic singularities in dimension $2$.
They are precisely the canonical singularities in dimension $2$
and have many other characterizations. 
In characteristic $0$, they are characterized as the quotient singularities by the linear actions of finite subgroups of $\SL_2$,
and in particular they admit finite coverings by regular local rings.
However, some of the quotient presentations fail in positive characteristic,
even if we take finite group schemes into account.
Nevertheless, Artin \cite{Artin:RDP} determined \'etale fundamental groups of RDPs and showed the existence of finite coverings (possibly ramified and possibly inseparable) by regular local rings.

In this paper we study
finite purely inseparable morphisms $\Spec B \to \Spec B'$ of degree $p$
between RDPs,
and give a classification (Theorem \ref{thm:main}).
As an application, we show (Theorem \ref{thm:smooth covering}) that any RDP $\Spec B$ admits a finite covering of the form $\Spec k[[x,y]] \namedto{f} \Spec \tilde{B} \namedto{g} \Spec B$,
where $f$ is a composite of purely inseparable morphisms of degree $p$ and $g$ is the universal covering outside the closed point.
This refines the above-mentioned result of Artin in the respect that we use a restricted class of morphisms.
We moreover show that, in all but one cases, $\tilde{B}$ and all the intermediate steps of $f$ can be taken to be RDPs.

Our classification will be used in a subsequent paper \cite{Liedtke--Martin--Matsumoto:RDPtors} 
to show that certain RDPs in characteristic $2$ and $3$ 
are not quotient singularities by any finite group schemes.
See loc.\ cit.\ for the precise definition of the quotient singularities by finite group schemes.
 
\bigskip

To state the main theorem, we first introduce some terminology and conditions.

We work over an algebraically closed field $k$ of characteristic $p > 0$.
An \emph{at most RDP} is a complete local $k$-algebra of dimension $2$
that is either an RDP or smooth.

Let $\map{\pi}{\Spec B}{\Spec B'}$ be a finite purely inseparable morphism of degree $p$ between at most RDPs.
Then it is given as the quotient by a $p$-closed derivation $D$ on $B$ (Proposition \ref{prop:inseparable extension and p-closed derivation}(\ref{item:inseparable then p-closed})), 
and $D$ is unique up to $\Frac(B)^*$-multiple.
The relation between $B$ and $B'$ is reciprocal in the sense that
knowing $\map{\pi}{\Spec B}{\Spec B'}$ is equivalent to knowing $\map{\pi'}{\Spec B'}{}{\Spec \pthpower{B}}$.
The morphism $\pi'$ is also purely inseparable of degree $p$ 
and hence given as the quotient by a $p$-closed derivation $D'$ on $B'$.
We say that the morphisms $\pi$ and $\pi'$ are dual to each other.

We may assume that the fixed locus $\Fix(D)$ of $D$ (Definition \ref{def:fixed locus}), which is a closed subscheme of $\Spec B$,
satisfies exactly one of the following three conditions:
\begin{enumerate}[(a)]
	\item \label{case:empty} $\Fix(D) = \emptyset$.
	\item \label{case:point} $\Fix(D) = \set{\idealm}$.
	\item \label{case:non-principal} The divisorial part $\divisorialfix{D}$ of $\Fix(D)$ is a non-Cartier (Weil) divisor.
	In other words, the corresponding ideal of $B$ is not principal.
\end{enumerate}
Similarly we may assume exactly one of 
(\ref{case:empty}$'$), (\ref{case:point}$'$), (\ref{case:non-principal}$'$) according to $\Fix(D')$.
Thus there are a priori $3^2 = 9$ possibilities.

\begin{thm} \label{thm:main}
Suppose $B$ and $B'$ are complete $2$-dimensional local $k$-algebras,
both either smooth or RDP.
Suppose $\Spec B \to \Spec B'$ is purely inseparable of degree $p$. 
Then, (\ref{case:non-principal}) holds if and only if (\ref{case:non-principal}$'$) holds.
Therefore $5$ possibilities among $9$ remains,
and in each case we have the following classification.
\begin{enumerate}
	\item \label{case:smooth}
	If (\ref{case:empty}) and (\ref{case:empty}$'$) hold,
	then both $B$ and $B'$ are smooth.
	In this case, it is known 
	(Rudakov--Shafarevich \cite{Rudakov--Shafarevich:inseparable}*{Theorem 1 and Corollary}
	and Ganong \cite{Ganong:frobenius}*{Theorem})
	that $B = k[[x,y]]$ and $B' = k[[x,y^p]]$ for some $x,y \in B$.
	
	\item \label{case:non-fixed}
	If (\ref{case:empty}) and (\ref{case:point}$'$) hold,
	then $B \cong k[[x,y,z]]/(F) \supset B' \cong k[[x,y,z^p]]/(F)$ and $D(x) = D(y) = 0$,
	where $F$ is one of Table \ref{table:derivations on RDPs:non-fixed}.
	
	\item \label{case:non-fixed2}
	Dually, if (\ref{case:point}) and (\ref{case:empty}$'$) hold,
	then either $B \cong k[[x,y]]$ or $B \cong k[[x,y,z]]/(F)$
	with $F$ and $D$ as in one of Table \ref{table:derivations on RDPs:non-fixed2},
	up to replacing $D$ with a unit multiple.
	$B'$ is generated 
	by $x^p,y^p$ and one more element if $B$ is smooth
	and by $x^p,y^p,z$ if $B$ is an RDP,
	subject to an equation similar to the one in the dual case in Table \ref{table:derivations on RDPs:non-fixed}.
	
	\item \label{case:both fixed:principal}
	Suppose (\ref{case:point}) and (\ref{case:point}$'$) hold.
	Then $(\Sing(B), \Sing(B'))$ is one of Table \ref{table:derivations on RDPs:fixed},
	and there are elements $x,y,z,w \in B$ such that 
	\begin{align*}
	B  &= k[[x,y,z,w]]     / (x^p - P(y^p,z,w), w - Q(z,y,x)) \quad \text{and} \\
	B' &= k[[w,z,y^p,x^p]] / (w^p - Q(z,y,x)^p, x^p - P(y^p,z,w)),
	\end{align*}
	with $P$ and $Q$ as in the table up to terms of high degree.
	Up to a unit multiple, $D$ satisfies $(D(x),D(y),D(z)) = (-Q_y, Q_x, 0)$.
	
	\item \label{case:both fixed:non-principal}
	Suppose (\ref{case:non-principal}) and (\ref{case:non-principal}$'$) hold.
	Let $l^{(\prime)}$ be the order of the class $[\divisorialfix{D^{(\prime)}}]$ in $\Cl(B^{(\prime)})$.
	Then $l = l'$ and $l \divides (p-1)$, 
	and we have $B \cong k[[x,y,z]]/(F)$ with $D(z) = 0$ and $B' \cong k[[w,z,y^p]]/(F')$, where
	$(l, \Sing(B), \Sing(B'), F, w)$ is one of Table \ref{table:derivations on RDPs:non-principal}. 
	
\end{enumerate}
In particular, if $B'$ (resp.\ $B$) is smooth, then (\ref{case:empty}) (resp.\ (\ref{case:empty}$'$)) holds.
\end{thm}

In Table \ref{table:derivations on RDPs:fixed},
$\positive{q} := \max\set{0, q}$ denotes the positive part of a real $q$.
In Table \ref{table:derivations on RDPs:non-principal},
$l$ is an arbitrary positive integer and $p$ is a prime satisfying $p \equiv 1 \pmod{l}$, unless specified otherwise.

\begin{conv} \label{conv:Dnr}
	Non-taut RDPs are the RDPs (over a fixed algebraically closed field $k$) not uniquely determined from the dual graph of the minimal resolution.
	Artin \cite{Artin:RDP} classified them and introduced the notation $D_n^r$, $E_n^r$ using the coindex $r$.
	We follow Artin's notation 
	with the following exception: 
	We say that $k[[x,y,z]] / (z^2 + x^2 y + z y^n + z x y^{n-s})$
	($n \geq 2$, $0 \leq s \leq n-1$) in characteristic $2$ to be of type $D_{2n+1}^{s + 1/2}$,
	instead of Artin's notation $D_{2n+1}^s$.
	Consequently, the range of $r$ for $D_{2n+1}^r$ is $\set{\frac{1}{2}, \frac{3}{2}, \dots, \frac{2n-1}{2}}$.
	Under this notation, the RDP defined by $z^2 + x^2 y + z x y^m + y^n = 0$ ($m \geq 1$, $n \geq 2$)
	is of type $D_{n+2m}^{n/2}$ regardless of the parity of $n$,
	and this makes it easier to write Table \ref{table:derivations on RDPs:fixed} and 
	the proofs of Theorem \ref{thm:smooth covering} and Proposition \ref{prop:smooth covering etale last}. 
	
	Also, we use the convention that $D_3 := A_3$ and that $\smpoint$ is a smooth point.
\end{conv}

\begin{conv} \label{conv:RDP}
	Let $X = \Spec B$ be the spectrum of a complete local $k$-algebra $B$,
	$x$ its closed point, $U = X \setminus \set{x}$, 
	and suppose $U$ is smooth.
\begin{enumerate}
\item
	$\Cl(B) := \Pic(U)$ is the (divisor) class group of $B$.
	If $B$ is an at most RDP (which is complete by our definition),
	it follows from \cite{Lipman:rationalsingularities}*{Proposition 17.1} that
	this group is determined from the Dynkin diagram as in Table \ref{table:Picard group of RDP} and is independent of the characteristic and the coindex. 
\item \label{conv:RDP:universal covering}
	We call $\pietloc(B) := \piet_1(U)$ the local fundamental group of $B$,
	and we define the universal (\'etale) covering of RDP $X = \Spec B$ to be the integral closure $X'$ of $X$
	in the universal covering $U'$ of $U$.
	Artin \cite{Artin:RDP} showed that if $X$ is an RDP then $\pietloc(B)$ is always finite and $X'$ is an at most RDP, 
	and determined them (Table \ref{table:pi1}).
	We say that $X$ is simply-connected if $U$ is so.
\end{enumerate}
\end{conv}

We prove Theorem \ref{thm:main} in Section \ref{sec:proof}
after some preparations on derivations in Section \ref{sec:derivation}.

\begin{table} 
	\caption{$p$-closed derivations on RDPs 
		with $D$ fixed-point-free ($m \geq 2$)} \label{table:derivations on RDPs:non-fixed} 
		\begin{tabular}{llll}
			\toprule
			$p$ & $B$            & $B'$      & equation of $B$            \\ 
			\midrule                      
			any & $A_{p-1}$      & smooth    & $x y + z^p$                \\ 
			any & $A_{pm-1}$     & $A_{m-1}$ & $x y + z^{pm}$             \\
			\midrule                      
			$5$ & $E_8^0$        & smooth    & $z^5 + x^2 + y^3$          \\ 
			\midrule                      
			$3$ & $E_6^0$        & smooth    & $z^3 + x^2 + y^4$          \\ 
			$3$ & $E_7^0$        & $A_1$     & $x^2 + y^3 + y z^3$        \\
			$3$ & $E_8^0$        & smooth    & $z^3 + x^2 + y^5$          \\ 
			\midrule                      
			$2$ & $D_{2m}^0$     & smooth    & $z^2 + x^2 y + x y^{m}$    \\ 
			$2$ & $D_{2m+1}^{1/2}$ & $A_{1}$ & $x^2 + y z^2 + x y^{m}$     \\
			$2$ & $E_{6}^{0}$    & $A_{2}$   & $x^2 + x z^2 + y^3$        \\
			$2$ & $E_7^0$        & smooth    & $z^2 + x^3 + x y^3$        \\ 
			$2$ & $E_8^0$        & smooth    & $z^2 + x^3 + y^5$          \\ 
			\bottomrule
		\end{tabular}
\end{table}

\begin{table} 
	\caption{$p$-closed derivations on smooth points and RDPs whose quotients are RDPs,
		with $D'$ fixed-point-free ($m \geq 2$)} \label{table:derivations on RDPs:non-fixed2} 
	\begin{tabular}{llllll}
		\toprule                                                                                                  
			$p$ & $B$            & $B'$            & equation of $B$      & $D(x), D(y), D(z)$       & $h$         \\ 
			\midrule                                                               
			any & smooth         & $A_{p-1}$       & ---                  & $x, -y$                  & $1$         \\ 
			any & $A_{m-1}$      & $A_{pm-1}$      & $x y + z^{m}$        & $x, -y, 0$               & $1$         \\ 
			\midrule                                                               
			$5$ & smooth         & $E_8^0$         & ---                  & $y^2, x$                 & $0$         \\ 
			\midrule                                                               
			$3$ & smooth         & $E_6^0$         & ---                  & $y^3, x$                 & $0$         \\ 
			$3$ & $A_1$          & $E_7^0$         & $x^2 + y^3 + yz$     & $z, x, 0$                & $0$         \\
			$3$ & smooth         & $E_8^0$         & ---                  & $y^4, -x$                & $- y^3$     \\ 
			\midrule                                                               
			$2$ & smooth         & $D_{2m}^0$      & ---                  & $x^2 + mxy^{m-1}, y^{m}$ & $m y^{m-1}$ \\ 
			$2$ & $A_{1}$        & $D_{2m+1}^{1/2}$& $x^2 + yz + xy^{m}$  & $z+my^{m-1}x, y^{m}, 0$  & $m y^{m-1}$ \\ 
			$2$ & $A_{2}$        & $E_6^0$         & $x^2 + xz + y^3$     & $y^{2}, z, 0$            & $0$         \\ 
			$2$ & smooth         & $E_7^0$         & ---                  & $xy^2, x^2 + y^3$        & $y^2$       \\ 
			$2$ & smooth         & $E_8^0$         & ---                  & $y^4, x^2$               & $0$         \\ 
			\bottomrule
	\end{tabular}
\end{table}

\newcommand{\somethinglonga}{{\scriptsize $z+my^{m-1}x+ny^{n-1},y^{m}$}}
\begin{table} 
	\caption{$p$-closed derivations on RDPs whose quotients are RDPs,
		with $\Fix(D) = \set{\idealm}$ and $\Fix(D') = \set{\idealm'}$ ($m,m' \geq 1$, $n \geq 2$)} 
	\label{table:derivations on RDPs:fixed} 
	{\footnotesize
					
		\begin{tabular}{lllllll}
			\toprule
			$p$ & $B$ & $B'$ & $x^p = P(y^p,z,w)\approx $ & $w = Q(z,y,x) \approx $ & $D(x),D(y) \approx$ & $h \approx$ \\
			\midrule                                                                 
			$3$ & $E_6^1$ & $E_6^1$ & $z^2 + z   w$ & $y^2 + yx$ & $x-y, -y$ & $1$ \\
			$3$ & $E_8^1$ & $E_6^0$ & $z^2 + y^3 w$ & $y^2 + yx$ & $x-y, -y$ & $1$ \\
			$3$ & $E_6^0$ & $E_8^1$ & $z^2 + z   w$ & $y^2 + zx$ & $y, z$    & $0$ \\
			$3$ & $E_8^0$ & $E_8^0$ & $z^2 + y^3 w$ & $y^2 + zx$ & $y, z$    & $0$ \\
			\midrule
			$2$ & $D_{2m'+2}^{\positive{2-m}}$ & $D_{2m+2}^{\positive{2-m'}}$ & $y^2 z + z^{m'} w $  & $zy + y^{m}x$ & $z+my^{m-1}x, y^{m}$        & $m y^{m-1}$ \\ 
			$2$ & $D_{2n}^{\positive{n-m}}$    & $D_{n+2m}^{n/2}$             & $z w$          & $zy + y^n + y^{m}x$ & \somethinglonga                & $m y^{m-1}$ \\ 
			$2$ & $D_{n+2m}^{n/2}$             & $D_{2n}^{\positive{n-m}}$    & $y^2 z + z^n + z^m w$ & $y x$        & $x, y$                      & $1$         \\
			$2$ & $D_{2m+3}^{1/2}$             & $E_7^{\positive{3-m}}$       & $y^2 z    + z^m w$    & $y^3  + z x$ & $y^{2}, z$                  & $0$         \\ 
			$2$ & $E_7^{\positive{3-m}}$       & $D_{2m+3}^{1/2}$             & $z^3      + y^2 w$    & $zy + y^m x$ & $z+my^{m-1}x, y^{m}$        & $m y^{m-1}$ \\ 
			$2$ & $E_7^3$                      & $E_7^3$                      & $z^3      + z   w$    & $y^3  + y x$ & $x+y^2, y$                  & $1$         \\
			$2$ & $E_7^2$                      & $E_8^3$                      & $z^3      + z   w$    & $y^3  + z x$ & $y^{2}, z$                  & $0$         \\ 
			$2$ & $E_8^3$                      & $E_7^2$                      & $z^3      + y^2 w$    & $y^3  + y x$ & $x+y^2, y$                  & $1$         \\
			$2$ & $E_8^2$                      & $E_8^2$                      & $z^3      + y^2 w$    & $y^3  + z x$ & $y^{2}, z$                  & $0$         \\ 
			\bottomrule                                                      
		\end{tabular}
}
\end{table}

\begin{table} 
\caption{$p$-closed derivations on RDPs whose quotients are RDPs, with non-Cartier divisorial fixed loci ($m \geq 1$)
	($p$ is any prime $\equiv 1 \pmod{l}$ unless specified)} \label{table:derivations on RDPs:non-principal} 
{ \footnotesize 
\begin{tabular}{lllllll}
\toprule
$p$ & $B$        & $B'$       & $l$ & $(D(x),D(y))$                  & equation of $B$                & $w$ \\ 
\midrule
    & $A_{l-1}$  & $A_{l-1}$  & any & $(x, ly)$, $(z, lx^{l-1})$     & $- x^{l} + y z$                & $x y^{(p-1)/l}$ \\
    & $D_{m+2}$  & $D_{mp+2}$ & $2$ & $(yz, x)$, $(yx, y^2+z^m)$     & $- x^2 + z (y^2 + z^{m})$      & $x (y^2 + z^m)^{(p-1)/2}$ \\
    & $D_{mp+2}$ & $D_{m+2}$  & $2$ & $(z^2+y^{mp}, 2x)$, $(x, 2y)$  & $- x^2 + y (z^2 + y^{mp})$     & $x y^{(p-1)/2}$ \\
$3$ & $E_7^1$    & $E_7^1$    & $2$ & $(xy,y^2+z)$, $((y^3+z^2)y,x)$ & $- x^2 + (y^3 + z^2)(y^2 + z)$ & $x (y^2 + z)$ \\
\bottomrule                                                                 
\end{tabular}
}
\end{table}

\begin{table}
	\caption{Class groups of complete RDPs (in any characteristic)} \label{table:Picard group of RDP}
	\begin{tabular}{ccccccc}
		\toprule
		smooth & $A_n$ & $D_{2m}$ & $D_{2m+1}$ & $E_6$ & $E_7$ & $E_8$ \\
		\midrule
		$0$ & $\bZ/(n+1)\bZ$ & $(\bZ/2\bZ)^2$ & $\bZ/4\bZ$ & $\bZ/3\bZ$ & $\bZ/2\bZ$ & $0$ \\
		\bottomrule
	\end{tabular}
\end{table}

\section{Preliminaries on $p$-closed derivations} \label{sec:derivation}

\subsection{$p$-closed derivations and their properties}

	A derivation $D$ on an $\bF_p$-algebra $B$ is called \emph{$p$-closed} if there exists $h \in B$ with $D^p = h D$.
(See also Proposition \ref{prop:h in B}.)

The following formula is well-known.
See \cite{Matsumura:commutativeringtheory}*{Theorem 25.5} for a proof.
\begin{lem}[Hochschild's formula]
	Let $B$ be an $\bF_p$-algebra, $a \in B$ an element, and $D$ a derivation on $B$.
	Then 
	\[
	(aD)^p = a^p D^p + (aD)^{p-1}(a) D.
	\]
\end{lem}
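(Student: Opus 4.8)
The plan is to prove the identity $(aD)^p = a^p D^p + (aD)^{p-1}(a)\,D$ by regarding $aD$ and $a^p D^p$ as operators on $B$ and comparing them. First I would recall the general framework: for any derivation $\delta$ on an $\mathbb{F}_p$-algebra, $\delta^p$ is again a derivation (Jacobson), and more precisely, for a derivation $\delta$ and an element $c$, one has the operator identity $(c\delta)^p = c^p\delta^p + \bigl((c\delta)^{p-1}c\bigr)\delta$; this is exactly the statement, so the task is to supply a self-contained argument. I would set $\delta = aD$ and expand $\delta^p$ as a noncommutative word, using that $\delta = aD$ factors through multiplication by $a$ followed by $D$.

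The key computational step is the classical \emph{Jacobson formula}: in an associative ring of characteristic $p$, for any two elements $u,v$,
\[
(u+v)^p = u^p + v^p + \sum_{i=1}^{p-1} s_i(u,v),
\]
where $i\,s_i(u,v)$ is the coefficient of $t^{i-1}$ in $\operatorname{ad}(tu+v)^{p-1}(u)$. I would apply a version of this adapted to the situation at hand: work inside the ring of additive operators on $B$, write $aD$, and use the Leibniz-type expansion of $(aD)^p$ in terms of iterated commutators with the operator ``multiplication by $a$''. Because $[D, m_a] = m_{D(a)}$ (where $m_a$ denotes multiplication by $a$) and multiplication operators commute among themselves, all the relevant commutators land again in multiplication operators, and one can track the coefficients explicitly. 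Collecting terms, every summand except $a^pD^p$ is a multiplication operator times a single $D$, and summing those multiplication coefficients reproduces precisely $(aD)^{p-1}(a)$ — here one uses that $(aD)^{p-1}$ applied to $a\in B$ is an element of $B$, and that the operator ``$m_c \circ D$'' applied to $1$ picks out nothing while its action on $B$ is $c\,D(-)$.

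A cleaner route, which I would likely prefer to present, is to verify the operator identity by testing it on $B$ and using induction on $p$ replaced by induction on the exponent $n$: establish by induction that $(aD)^n = \sum_{j} c_{n,j}\, D^j$ with $c_{n,j}\in B$, find the recursion $c_{n+1,j} = a\,c_{n,j-1} + a\,D(c_{n,j})$ governing the coefficients, and then show that modulo $p$ all coefficients $c_{p,j}$ with $0<j<p$ vanish while $c_{p,p}=a^p$ and $c_{p,1}$ collapses to $(aD)^{p-1}(a)$. The vanishing of the intermediate coefficients is where the characteristic $p$ hypothesis enters, via binomial coefficients $\binom{p}{j}\equiv 0$, after one identifies $c_{p,j}$ with an expression symmetric enough to invoke that congruence. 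The main obstacle is precisely this bookkeeping: showing that the intermediate-degree coefficients are divisible by $p$ requires recognizing the solution of the recursion in closed enough form (e.g.\ as a sum over compositions of $p$) to see the binomial divisibility, rather than just iterating blindly. Once that is in hand, reading off $c_{p,1} = (aD)^{p-1}(a)$ is immediate from the recursion, since the ``$D^0$'' contributions feeding into degree $1$ are exactly the successive applications of $aD$ to $a$.
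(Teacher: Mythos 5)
The paper offers no proof of this lemma at all---it is quoted as ``well-known''---so the only question is whether your argument stands on its own, and it does not quite. Your skeleton is the right one: write $(aD)^n=\sum_{j\ge 1}c_{n,j}D^j$, derive the recursion $c_{n+1,j}=a\,c_{n,j-1}+a\,D(c_{n,j})$, and read off $c_{p,p}=a^p$ and $c_{p,1}=(aD)^{p-1}(a)$ from its two extreme strands; those steps are correct and immediate. The gap is exactly where you locate it, namely the vanishing of $c_{p,j}$ for $1<j<p$, but the mechanism you propose for closing it (``a sum over compositions of $p$'' whose coefficients die because $\binom{p}{j}\equiv 0$) is not right. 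The integer coefficients occurring in the $c_{n,j}$ are not binomial coefficients: already $c_{4,2}=7a^2D(a)^2+4a^3D^2(a)$, which feeds into $c_{5,3}=25\,a^3D(a)^2+10\,a^4D^2(a)$ --- divisible by $5$, but $25$ is no $\binom{5}{j}$, so no composition-plus-$\binom{p}{j}$ bookkeeping will deliver the divisibility. Your first route fares no better: Jacobson's formula computes $p$-th powers of \emph{sums} $(u+v)^p$, whereas here one needs the $p$-th power of the \emph{product} $m_aD$, and the ``adapted version'' you invoke is precisely the statement to be proved. (There is also a small internal inconsistency: you assert that all $c_{p,j}$ with $0<j<p$ vanish and simultaneously that $c_{p,1}=(aD)^{p-1}(a)$.)

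The standard way to kill the intermediate coefficients is indirect. First, $(aD)^p$ is again a derivation: $aD$ is one, and for any derivation $E$ in characteristic $p$ one has $E^p(bc)=\sum_i\binom{p}{i}E^i(b)E^{p-i}(c)=E^p(b)c+bE^p(c)$ --- this is where $\binom{p}{i}\equiv 0$ genuinely enters. Next, since both sides of the asserted identity, applied to an element $b$, are universal polynomials over $\bF_p$ in the $D^i(a)$ and $D^i(b)$, it suffices to prove the identity in the universal differential polynomial ring $\bF_p[A_0,A_1,\dots,B_0,B_1,\dots,C_0,\dots]$ with $D(A_i)=A_{i+1}$ etc.\ and $a=A_0$. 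There, expanding the derivation identity $\sum_jc_jD^j(bc)=\bigl(\sum_jc_jD^j(b)\bigr)c+b\bigl(\sum_jc_jD^j(c)\bigr)$ by Leibniz and using the linear independence of the monomials $D^i(b)D^{j-i}(c)$, one finds $\binom{j}{i}c_j=0$ for all $0<i<j$; taking $i=1$ gives $j\,c_j=0$, hence $c_j=0$ for $1<j<p$ because such $j$ are units and the universal ring is a domain. With that step supplied, your recursion does finish the proof.
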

\begin{cor} \label{cor:h unit}
	Suppose a derivation $D$ on $B$ is $p$-closed.
\begin{itemize}
\item For any $a \in B$, $aD$ is also $p$-closed.
\item If $B$ is a domain, 
$D \neq 0$, $a \neq 0$, and $(aD)^p = h_1 aD$,
then $h_1 \in (h) + (\Image D)$.
\end{itemize}
\end{cor}
\begin{proof}
Suppose $D^p = h D$ with $h \in B$.
Then it follows from Hochschild's formula that $(aD)^p = h_1 aD$ with $h_1 = a^{p-1} h + D((aD)^{p-2}(a))$.
\end{proof}

We list some properties of $p$-closed derivations and inseparable extensions.
See Section \ref{subsec:examples} for counterexamples under weaker assumptions.
Note that RDPs are normal complete $F$-finite Noetherian domains and hence satisfy all the assumptions.
Here, we say that a domain is normal if it is integrally closed in its field of fractions.

\begin{lem} \label{lem:Dh=0}
	Let $B$ be an $\bF_p$-algebra, $h$ an element of $B$, and $D$ a $p$-closed derivation on $B$ with $D^p = h D$.
	Assume either $B$ is reduced,
	or $\Image D$ contains a regular element of $B$.
	Then $D(h) = 0$.
\end{lem}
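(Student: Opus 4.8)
The plan is to extract, purely formally from the hypothesis $D^p = hD$, the identity
\[
D(h)\cdot D(x) = 0 \quad\text{for all } x \in B,
\]
and then to deduce $D(h) = 0$ from the two alternative hypotheses on $B$.

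First I would note that $D^{p+1} = D \circ D^p = D^p \circ D$ by associativity of composition of operators, and substitute $D^p = hD$ into each side. For $x \in B$,
\[
\bigl(D \circ (hD)\bigr)(x) = D\bigl(h\, D(x)\bigr) = D(h)\, D(x) + h\, D^2(x),
\qquad
\bigl((hD) \circ D\bigr)(x) = h\, D^2(x).
\]
Comparing the two expressions for $D^{p+1}(x)$ gives $D(h)\, D(x) = 0$ for every $x \in B$; that is, $D(h)$ annihilates $\Image D$. (This step uses nothing about the characteristic; the $\bF_p$-hypothesis is really only the ambient assumption under which $p$-closedness is considered, and is not needed for the lemma itself.)

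Then I would conclude in each case. If $\Image D$ contains a regular element, say $r = D(x_0)$, then $D(h)\, r = 0$ forces $D(h) = 0$ since $r$ is a non-zero-divisor. If instead $B$ is reduced, I would put $x = h$ in the displayed identity to get $D(h)^2 = 0$, so $D(h)$ is nilpotent and hence $D(h) = 0$.

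There is essentially no serious obstacle here. The only points requiring care are the Leibniz-rule bookkeeping in the two expansions of $D^{p+1}$ (the term $D(h)\, D(x)$ appears on one side only), and the observation that the two stated hypotheses on $B$ are exactly what is needed to pass from $D(h)\cdot \Image D = 0$ to $D(h) = 0$ — in the reduced case through the self-annihilation $D(h)^2 = 0$.
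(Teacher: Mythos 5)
Your proof is correct and is essentially the same as the paper's: both derive $D(h)\,D = 0$ by comparing the two expansions of $D\circ D^p = D^p\circ D$ using $D^p = hD$, and then conclude in the reduced case via $D(h)^2=0$ (taking $x=h$) and in the regular-element case by cancellation. Your side remark that the $\bF_p$-hypothesis is not used in the argument is also accurate.
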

\begin{proof}
	We have $h D D = D^p D = D D^p = D(h D) = D(h) D + h D D$,
	hence $D(h) D = 0$.
	If $B$ is reduced, then $D(h)^2 = 0$ and $D(h) = 0$.
	If $D(a) \in \Image D$ is a regular element,
	then $D(h) D(a) = 0$ and $D(h) = 0$.
\end{proof}
	
\begin{prop} \label{prop:inseparable extension and p-closed derivation}
\begin{enumerate}
\item \label{item:p-closed then inseparable}
	Let $B$ be an $F$-finite normal domain and $D \neq 0$ be a $p$-closed derivation on $B$.
	Then $B \supset A := B^D$ is a finite extension of normal domains and $\Frac B / \Frac A$ is purely inseparable of degree $p$.
\item \label{item:inseparable then p-closed}
Let $B \supset A$ be a finite extension of normal domains over $\bF_p$,
such that $\Frac B / \Frac A$ is purely inseparable of degree $p$.
Then there exists a $p$-closed derivation $D$ on $B$ such that $A = B^D$.
Moreover, $D$ is unique up to $(\Frac B)^*$.
\end{enumerate}
\end{prop}
\begin{proof}
(\ref{item:p-closed then inseparable})
Since $B$ is finite over $\pthpower{B}$ (this is the definition of $F$-finiteness),
and clearly $A \supset \pthpower{B}$, it follows that $B$ is finite over $A$.
Normality of $A$ follows from that of $B$.
Now take $b \in \Frac B$ such that $D(b) \neq 0$.
It is straightforward to show that $b^i$ ($0 \leq i < p$) is a basis of $\Frac B$ over $\Frac A$, and that $b^p \in \Frac A$.

(\ref{item:inseparable then p-closed})
There exists $t \in \Frac B$ such that $\Frac B = (\Frac A)[t]$ (and then $t^p \in A$).
Then $D' = \partialdd{t}$ is a $p$-closed derivation on $\Frac B$ such that $\Frac A = (\Frac B)^{D'}$.
Take $x_1, \dots, x_n \in B$ such that $B = A[x_1, \dots, x_n]$.
Take $b \in B \setminus \set{0}$ such that $b D'(x_1^{i_1} \dots x_n^{i_n}) \in B$ for every $i_1, \dots, i_n \in \set{0, 1, \dots, p-1}$.
Then $D := b D'$ takes values in $B$ and is also $p$-closed. 
We obtain $B^D = B \cap \Frac A = A$,
where the last equality follows from $B$ being integral over $A$ and $A$ being normal.

Suppose $D''$ is another derivation on $B$ with the same property.
Then the derivations $(D(t)/D''(t))D''$ and $D$ on $\Frac B$ agree on $\Frac A$ and $t$, hence agree on $\Frac B$.
\end{proof}

\begin{prop} \label{prop:h in B}
Let $B$ be a normal Noetherian domain,
$D \neq 0$ a derivation on $B$, and $h \in \Frac B$.
Suppose $D^p = h D$. Then $h \in B$.

Thus, under this assumption on $B$, the definition of $p$-closedness can be either $h \in B$ or $h \in \Frac B$.
\end{prop}
\begin{proof}
Let $I := (\Image D) \neq 0$ be the ideal generated by the image of $D$.
Then $h I = (\Image h D) = (\Image D^p) \subset (\Image D) = I$.
Since $B$ is Noetherian, $I$ is finitely generated, and then the usual argument using determinant shows that $h$ is integral over $B$.
Since $B$ is normal, we have $h \in B$.
\end{proof}

\subsection{Examples of derivations} \label{subsec:examples}

For the sake of completeness,
we give counterexamples for Lemma \ref{lem:Dh=0} and Propositions \ref{prop:inseparable extension and p-closed derivation} and \ref{prop:h in B} under weaker assumptions.
These examples are not needed for the rest of the paper.
$k$ will be a field of characteristic $p$. 

\begin{example}[cf.\ Lemma \ref{lem:Dh=0}] \label{ex:Dh nonzero}
\begin{enumerate}
\item
	There are examples of $D$ such that $D^p = h_1 D = h_2 D$, $D(h_1) = 0$, but $D(h_2) \neq 0$.
	Let $B = k[t]/(t^p)$, $D = t \partialdd{t}$, $h_1 = 1$, $h_2 = 1 + t^{p-1}$. Then $D(h_1) = 0$ but $D(h_2) = (p-1)t^{p-1} \neq 0$.
\item
	There are examples of $D$ that is $p$-closed but $D(h) \neq 0$ for any $h$ satisfying $D^p = h D$.

	First, let $B = k[x,y] / (x^i y^j \mid i + j = p+1, (i,j) \neq (1,p))$,
	$D = (x^2 + x y) \partialdd{x} + x y \partialdd{y}$.
	We observe that $D$ induces $\map{D_l}{\idealm^l/\idealm^{l+1}}{\idealm^{l+1}/\idealm^{l+2}}$
	and that $D_l$ are injective for $1 \leq l < p$.
	Given $h \in B$, to check whether the derivations $D^p$ and $h D$ are equal, 
	it suffices to check that they agree on $x$ and $y$.
	Since $D^p(x) = D^p(y) = x y^p$, we obtain $D^p = y^{p-1} D$, 
	hence $h$ satisfies this property if and only if $h - y^{p-1} \in \Ann(D(x),D(y)) = (\idealm^p + x \idealm^{p-2})$,
	which implies $h \not\equiv 0 \pmod{\idealm^p}$.
	By the injectivity of $D_{p-1}$, we have $D(h) \neq 0$ for any such $h$.

	Another example is as follows.
	Let $B = k[x, y_2, \dots, y_p] / (y_2, \dots, y_p)^2$,
	$D = y_2 \partialdd{x} + \sum_{i = 2}^{p-1} y_{i+1} \partialdd{y_i} + x y_2 \partialdd{y_p}$.
	We have $D^p = x D$.
	Then $D^p = h D$ if and only if $h \equiv x \pmod{(y_2, \dots, y_p)}$ and then $D(h) \equiv y_2 \not\equiv 0 \pmod{(x y_2, y_3, \dots, y_p)}$.
\end{enumerate}
\end{example}

\begin{example}[cf.\ Proposition \ref{prop:inseparable extension and p-closed derivation}] \label{ex:inseparable but not derivation quotient}
Let $B = k[x_1, \dots, x_n, \dots]$ (which is not $F$-finite).
\begin{enumerate}
	\item 
	Let $\map{D = \sum_{i = 1}^{\infty} x_i \partialdd{x_i}}{B}{B}$.
	Then $\Spec B \to \Spec B^D$ is not finite.
	Indeed, if $\idealm \subset B$ and $\idealm' \subset B^D$ are the maximal ideals at the origin,
	then $\idealm/(\idealm^2 + \idealm')$ is of infinite dimension.
\item
Let $\map{D' = \sum_{i=1}^{\infty} x_i^{-p} \partialdd{x_i}}{B}{\Frac B}$.
Then, although $\Frac B / \Frac B^{D'}$ is purely inseparable of degree $p$ and $B \supset B^{D'}$ is an integral extension between normal domains,
there is no $p$-closed derivation $D$ on $B$ such that $B^D = B^{D'}$.
Indeed, if such $D$ exists then $D = b D'$ for some $b \in \Frac B \setminus \set{0}$,
and $b$ satisfies $b D'(x_i) = b x_i^{-p} \in B$ for every $i$, but this is impossible.
\end{enumerate}
\end{example}

\begin{example}[cf.\ Proposition \ref{prop:h in B}] \label{ex:h not in B}
Let $B = k[t^i \mid i \in \bZ, i > p(p-1)] \subset k[t]$ and $D = t^{p+1} \partialdd{t}$.
This is Noetherian and not normal.
Then $D^p = h D$ with $h = t^{p(p-1)} \in \Frac B$, but $h \notin B$.

Let $B = k[y x^i \mid i \in \bZ]$ or $B = k[x, y x^i \mid i \in \bZ] \subset k[x^{\pm 1}, y]$ and $D = \frac{y}{x} \partialdd{y}$.
This is normal and non-Noetherian.
Then $D^p = h D$ with $h = x^{-(p-1)} \in \Frac B$, but $h \notin B$.
\end{example}

\subsection{$p$-closed derivation quotients}

We recall the notion of the fixed loci of derivations and how they are related to derivation quotients.
Rudakov--Shafarevich \cite{Rudakov--Shafarevich:inseparable}
uses the term \emph{singularity} but 
we do not use this, as we want to distinguish them from the singularities of the varieties.

\begin{defn} \label{def:fixed locus}
	Suppose $D$ is a derivation on a scheme $X$.
	The \emph{fixed locus} $\Fix(D)$
	is the closed subscheme of $X$ 
	corresponding to the sheaf $(\Image (D))$ of ideals generated by $\Image(D) = \set{D(a) \mid a \in \cO_X}$.
	A \emph{fixed point} of $D$ is a point of $\Fix(D)$.
	
	Assume $X$ is a smooth irreducible variety and $D \neq 0$. 
	Then $\Fix(D)$ consists of its divisorial part $\divisorialfix{D}$ and non-divisorial part $\isolatedfix{D}$.
	If we write $D = f \sum_i g_i \partialdd{x_i}$ for some local coordinate $x_i$
	with $g_i$ having no common factor,
	then $\divisorialfix{D}$ and $\isolatedfix{D}$ correspond to the ideals $(f)$ and $(g_i)$ respectively.

	Assume $X$ is a smooth irreducible variety and suppose $D \neq 0$ is now a \emph{rational} derivation,
	locally of the form $f^{-1} D'$ for some regular function $f$ and (regular) derivation $D'$.
	Then we define $\divisorialfix{D} = \divisorialfix{D'} - \divisor(f)$ and $\isolatedfix{D} = \isolatedfix{D'}$.
	
	If $X$ is only normal, then we can still define $\divisorialfix{D}$ as a Weil divisor.
\end{defn}

\begin{prop}[\cite{Matsumoto:k3alphap}*{Proposition 2.14}] \label{prop:n-step}
	Suppose $X_0 \namedto{\pi_0} X_1 \namedto{\pi_1} \dots \namedto{\pi_{n-1}} X_n = \pthpower{X_0}$
	is a sequence of purely inseparable morphisms of degree $p$ between $n$-dimensional integral normal varieties,
	with each $\pi_i$ given by a $p$-closed rational derivation $D_i$ on $X_i$.
	Then $K_{X_0}$ is linearly equivalent to $-\sum_{i = 0}^{n-1} (\pi_{i-1} \circ \dots \circ \pi_0)^*( \divisorialfix{D_i})$.
\end{prop}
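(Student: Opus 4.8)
The plan is to read the identity off a flag of relative tangent sheaves along the tower, rather than from a one‑step formula. A one‑step formula $K_{X_i}=\pi_i^*K_{X_{i+1}}+(p-1)\divisorialfix{D_i}$, telescoped together with $\rho_n^*K_{\pthpower{X_0}}=pK_{X_0}$, would only pin down $K_{X_0}$ up to $(p-1)$‑torsion in $\Cl(X_0)$ — insufficient, e.g.\ when the relevant local Picard group has order dividing $p-1$ — so a more global argument is needed. Write $\rho_i:=\pi_{i-1}\circ\dots\circ\pi_0\colon X_0\to X_i$ (so $\rho_0=\id$); the composite $\rho_n\colon X_0\to\pthpower{X_0}$ is the relative Frobenius. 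Since both sides of the asserted equality are Weil divisor classes on the normal variety $X_0$, it suffices to produce an isomorphism of line bundles over an open $U\subseteq X_0$ with $\codim_{X_0}(X_0\setminus U)\ge 2$. I would choose $U$ so that for every $i$: $U$ and $\rho_i(U)$ are smooth with complements of codimension $\ge 2$ in $X_0$ and $X_i$; $\rho_i|_U$ and $\pi_i|_{\rho_i(U)}$ are finite flat (miracle flatness) of degrees $p^i$ and $p$; and the relative differential sheaves $\Omega^1_{\rho_i}$, $\Omega^1_{\pi_i}$ are locally free there. Every discarded locus is contained in the preimage of some $\Sing(X_j)$, hence of codimension $\ge 2$. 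On $U$ put $\cF_i:=T_{X_i/X_{i+1}}=(\Omega^1_{X_i/X_{i+1}})^\vee$, a line bundle; this is the foliation defining $\pi_i$, a regular representative of $D_i$ is a section of $\cF_i$ with zero divisor $\divisorialfix{D_i}$, so $\cF_i\cong\cO_{X_i}(\divisorialfix{D_i})$ and $\rho_i^*\cF_i\cong\cO_U(\rho_i^*\divisorialfix{D_i})$, the class being unchanged if $D_i$ is scaled by a $\Frac(X_i)^*$‑multiple.

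For $0\le j\le n$ set $H_j:=T_{U/X_j}$, viewed inside $T_U$. Then $H_0=0$, and $H_n=T_U$ because $\rho_n$ is relative Frobenius (so $\Omega^1_{U/\pthpower{X_0}}=\Omega^1_U$). Since $k(X_0)^p\subseteq k(X_j)$, the extension $k(X_0)/k(X_j)$ is purely inseparable of exponent $\le 1$ and degree $p^j$, so $\Omega^1_{X_0/X_j}$ has rank $j$ and is locally free on $U$; thus $0=H_0\subset H_1\subset\dots\subset H_n=T_U$ is a flag of subbundles. The cotangent sequence of $U\namedto{\rho_j}X_j\namedto{\pi_j}X_{j+1}$ reads $\rho_j^*\Omega^1_{X_j/X_{j+1}}\namedto{\alpha_j}\Omega^1_{U/X_{j+1}}\to\Omega^1_{U/X_j}\to 0$; on $U$ the last arrow is a surjection of vector bundles of ranks $j+1$ and $j$, its kernel is a line bundle onto which $\alpha_j$ maps, and a nonzero map of line bundles over an integral scheme is injective, so $\alpha_j$ identifies $\rho_j^*\Omega^1_{X_j/X_{j+1}}$ with that kernel. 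Dualizing the resulting short exact sequence of vector bundles gives $0\to H_j\to H_{j+1}\to\rho_j^*\cF_j\to 0$, i.e.\ $H_{j+1}/H_j\cong\rho_j^*\cF_j$. Taking determinants along the flag,
\[
\omega_U^{-1}=\det T_U=\bigotimes_{j=0}^{n-1}\bigl(H_{j+1}/H_j\bigr)\cong\bigotimes_{j=0}^{n-1}\rho_j^*\cF_j\cong\cO_U\Bigl(\sum_{j=0}^{n-1}\rho_j^*\divisorialfix{D_j}\Bigr),
\]
so $\omega_U\cong\cO_U\bigl(-\sum_{j}\rho_j^*\divisorialfix{D_j}\bigr)$; since $\codim_{X_0}(X_0\setminus U)\ge 2$ this is the asserted equality of classes $K_{X_0}=-\sum_{j=0}^{n-1}\rho_j^*\divisorialfix{D_j}$.

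The part I expect to be the real work is the codimension‑one bookkeeping behind the open set $U$: above all, that the relative cotangent sheaves $\Omega^1_{\rho_i}$ and $\Omega^1_{\pi_i}$ are locally free away from codimension $\ge 2$ (so that the cotangent sequences above really are short exact sequences of vector bundles and the $H_j$ are honest subbundles), together with the verification that the non‑flat loci, the singular loci and their preimages, and the loci of non‑saturation of the $H_j$ are all of codimension $\ge 2$. Granting this, the identification $H_{j+1}/H_j\cong\rho_j^*\cF_j$ and the determinant computation are formal. An alternative, more computational route bypasses the flag entirely: argue at each prime divisor $P\subset X_0$, where $\cO_{X_0,P}$ is a DVR and $\rho_i$ induces an extension of complete DVRs $\kappa'[[s]]\subseteq\kappa[[t]]$, and compute the order along $P$ of a rational section of $\omega_{X_0}$, distinguishing the case where $\pi_i$ is ``transverse'' to $P$ (so $s$ is a unit times $t^p$) from the case where it is ``tangent'' to $P$ (so $s$ is a unit times $t$, with inseparable residue extension). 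The flag argument is cleaner, however, and — crucially — delivers the equality of classes directly, without any division by $p-1$.
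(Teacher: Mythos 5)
This proposition is not proved in the paper at all: it is imported verbatim from \cite{Matsumoto:k3alphap}*{Proposition 2.10}, so there is no in-paper argument to compare yours against. Judged on its own, your flag-of-foliations proof is correct in strategy and, I believe, in substance, and your opening remark is exactly the right one: telescoping the one-step formula $K_{X_i}=\pi_i^*K_{X_{i+1}}+(p-1)\divisorialfix{D_i}$ through $\rho_n^*K_{\pthpower{X_0}}=pK_{X_0}$ only yields $(p-1)\bigl(K_{X_0}+\sum_i\rho_i^*\divisorialfix{D_i}\bigr)=0$, and the paper applies the Corollary precisely in local Picard groups of order dividing $p-1$ (the $A_{l-1}$ case with $l\mid p-1$ in Section \ref{sec:non-principal}), so the torsion-free statement is genuinely needed. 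The identification of $H_{j+1}/H_j$ with $\rho_j^*\cF_j$ is sound: exactness of the first cotangent sequence already says the image of $\alpha_j$ \emph{equals} the kernel of $\Omega^1_{U/X_{j+1}}\to\Omega^1_{U/X_j}$, so injectivity of $\alpha_j$ (a generically nonzero map out of a line bundle, nonzero because $k(X_j)\not\subseteq k(X_{j+1})=k(X_{j+1})\cdot k(X_0)^p$) upgrades this to an isomorphism; and the normalization $\cF_i\cong\cO_{X_i}(\divisorialfix{D_i})$ is exactly what Definition \ref{def:fixed locus} is designed to make true for rational as well as regular representatives of $D_i$.

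The one point you leave open --- local freeness of $\Omega^1_{U/X_j}$ (rank $j$) and $\Omega^1_{X_j/X_{j+1}}$ (rank $1$) away from codimension $2$ --- is a genuine obligation, since a priori the non-locally-free locus of a relative cotangent sheaf need not sit over the singular loci; but it closes cleanly, and you should close it rather than defer it. At a codimension-one point $P$ of $X_0$ all the local rings $R=\cO_{X_0,P}\supset R_1\supset\dots\supset R_n$ are DVRs (normality plus finiteness), and each $R_i$ is the integral closure of $R_{i+1}$ in a degree-$p$ purely inseparable extension; excellence rules out defect, so $ef=p$ forces either $e=p$ (take $\theta$ a uniformizer of $R_i$) or $f=p$ (take $\theta$ lifting a generator of the residue extension), and in either case $R_i=R_{i+1}[T]/(T^p-\theta^p)$ by comparing free $R_{i+1}$-modules of rank $p$. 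Hence $\Omega^1_{R_i/R_{i+1}}=R_i\,dT$ is free of rank one, and an induction up the tower using your (split) short exact sequences gives $\Omega^1_{R/R_j}$ free of rank $j$; the injectivity of $\alpha_j$ over the DVR follows as in your generic-point argument since a torsion image would die in $\Omega^1_{k(X_0)/k(X_{j+1})}$. With that inserted, the dualization and the determinant computation are formal, as you say, and the conclusion holds as an equality of Weil divisor classes. The alternative valuation-theoretic route you sketch at the end (transverse versus tangent behaviour of each $\pi_i$ at $P$) is essentially the same computation in different clothing.
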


\begin{cor} \label{cor:dual derivation}
	If $\map{\pi}{X = \Spec B}{X' = \Spec B'}$ and $D$, $D'$ are as in Theorem \ref{thm:main},
	then the order of the class $[\divisorialfix{D}]$ in $\Cl(B)$
	is equal to that of $[\divisorialfix{D'}]$ in $\Cl(B')$.
\end{cor}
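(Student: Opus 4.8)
The plan is to read off the needed relation from the Proposition of \cite{Matsumoto:k3alphap} quoted above, using that an at most RDP has trivial canonical class, and then to exploit the symmetry between $\pi$ and its dual $\pi'$. First I would apply that Proposition to the length‑two sequence $\Spec B \namedto{\pi} \Spec B' \namedto{\pi'} \Spec \pthpower{B}$ of purely inseparable degree‑$p$ morphisms, with the arrows given by the $p$‑closed rational derivations $D$ on $B$ and $D'$ on $B'$ (so that $\pi'\circ\pi$ is the relative Frobenius of $\Spec B$, as in the introduction). It yields, in $\Cl(\Spec B) = \Pic(B)$, the equality $K_{\Spec B} = -\divisorialfix{D} - \pi^*(\divisorialfix{D'})$. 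Since $B$ is at most an RDP, it is regular or a hypersurface singularity, hence Gorenstein, so $\omega_B$ is free and $K_{\Spec B} = 0$ in $\Pic(B)$; thus $\divisorialfix{D} = -\pi^*(\divisorialfix{D'})$ there. As $\pi^*$ is a group homomorphism, $\ord(\divisorialfix{D})$ divides $\ord(\divisorialfix{D'})$; in the notation of Theorem~\ref{thm:main}, $l \divides l'$.

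Next I would run the same argument for the dual morphism. The dual of $\pi'\colon \Spec B' \to \Spec\pthpower{B}$ is the Frobenius twist $\pthpower{\pi}\colon \Spec\pthpower{B} \to \Spec\pthpower{B'}$, given by the $p$‑closed derivation $\pthpower{D}$, and $\pthpower{\pi}\circ\pi'$ is the relative Frobenius of $\Spec B'$ (by functoriality of the relative Frobenius applied to $\pi$, together with the fact that the finite surjective $\pi$, with $B'\hookrightarrow B$, is an epimorphism of schemes). Applying the quoted Proposition to $\Spec B' \namedto{\pi'} \Spec\pthpower{B} \namedto{\pthpower{\pi}} \Spec\pthpower{B'}$ and using $K_{\Spec B'} = 0$ give $\divisorialfix{D'} = -(\pi')^*(\divisorialfix{\pthpower{D}})$ in $\Pic(B')$, so $l' = \ord(\divisorialfix{D'})$ divides $\ord(\divisorialfix{\pthpower{D}})$. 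Since $k$ is perfect, $\Spec\pthpower{B}\isom\Spec B$ as $k$‑schemes compatibly with the derivations and their fixed loci (and $\Pic$ depends only on the Dynkin type by Table~\ref{table:Picard group of RDP}), so $\ord(\divisorialfix{\pthpower{D}}) = \ord(\divisorialfix{D}) = l$. Hence $l' \divides l$, and combined with the first step, $l = l'$.

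The step I expect to be the main obstacle is the bookkeeping in this dual application: checking that the dual of $\pi'$ is exactly $\pthpower{\pi}$ with derivation $\pthpower{D}$, that $\pthpower{\pi}\circ\pi'$ really is the relative Frobenius of $\Spec B'$ so that the Proposition applies verbatim, and that passing to the Frobenius twist leaves the order of the divisorial fixed locus in the local Picard group unchanged. All of this is formal, and in the write‑up one could instead simply invoke that the whole situation is symmetric under the duality $\pi \leftrightarrow \pi'$ (up to the harmless twist by the perfect‑field Frobenius), so that $l' \divides l$ follows from $l \divides l'$ by symmetry.
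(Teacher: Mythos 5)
Your proposal is correct and follows essentially the same route as the paper: the paper likewise applies the quoted proposition to $X \to X' \to \pthpower{X}$ with $K_X = 0$ to get $\divisorialfix{D} + \pi^*(\divisorialfix{D'}) = 0$ in $\Pic(B)$, and then invokes the dual sequence for $B'$, leaving the two-way divisibility of orders implicit. Your write-up just makes the bookkeeping (Gorenstein hence $K=0$, pullback is a homomorphism, Frobenius twist preserves the order) explicit.
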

\begin{proof}
	By applying Proposition \ref{prop:n-step} to the sequence $X \to X' \to \pthpower{X}$
	and using $K_X = 0$,
	we obtain $[\divisorialfix{D}] + \pi^*([\divisorialfix{D'}]) = 0$ in $\Cl(B)$.
	Dually we have $\pi'^*([\divisorialfix{\pthpower{D}}]) + [\divisorialfix{D'}] = 0$ in $\Cl(B')$.
\end{proof}

\begin{prop}[\cite{Matsumoto:k3alphap}*{Proposition 2.12}] \label{prop:2-forms}
	Suppose $\map{\pi}{X}{X'}$ is a purely inseparable morphism of degree $p$ between smooth varieties of dimension $m$,
	induced by a $p$-closed rational derivation $D$ such that $\Delta := \Fix(D)$ is divisorial.
	Then we have isomorphisms
	$\Omega^m_{X/k}(\Delta) \cong (\pi^* \Omega^m_{X'/k})(p\Delta)$ and 
	$(\pi_* \Omega^m_{X/k}(\Delta))^D \cong \Omega^m_{X'/k}(\pi_* \Delta)$ 
	sending $f_0 \cdot df_1 \wedge \dots \wedge df_{m-1} \wedge D(g)^{-1} dg 
	\mapsto f_0 \cdot df_1 \wedge \dots \wedge df_{m-1} \wedge D(g)^{-p} d(g^p)$
	if $f_i,g \in \cO_X$, $D(f_i) = 0$ ($1 \leq i \leq m-1$), 
	and $D(g)^{-1} \in \cO_X(\Delta)$, and for the second morphism if moreover $D(f_0) = 0$.

	In particular, we obtain the Rudakov--Shafarevich formula $K_X \sim \pi^* K_{X'} + (p-1) \Delta$.
\end{prop}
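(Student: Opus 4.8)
The plan is to verify the displayed formula directly, in three stages, with the canonical bundle formula and the second isomorphism obtained as formal consequences. The first stage is to show that the assignment $f_0\,df_1\wedge\dots\wedge df_{m-1}\wedge D(g)^{-1}dg\mapsto f_0\,df_1\wedge\dots\wedge df_{m-1}\wedge D(g)^{-p}d(g^p)$ (with $D(f_i)=0$) is well defined and gives a $k(X)$-linear isomorphism of generic stalks $\Omega^m_{k(X)/k}\isomto\Omega^m_{k(X')/k}\otimes_{k(X')}k(X)$. The second is to check that, given the hypothesis $D(g)^{-1}\in\cO_X(\Delta)$, this isomorphism carries the invertible subsheaf $\Omega^m_{X/k}(\Delta)$ onto $(\pi^*\Omega^m_{X'/k})(p\Delta)$; since both are invertible on the smooth variety $X$, it is enough to verify this at each prime divisor $E\subset X$.

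For the first stage, note that for any $g\in k(X)$ with $D(g)\neq0$ one has $k(X)=k(X')(g)$ and $g^p\in k(X')$. The structural input is that the natural map $\Omega^1_{k(X')/k}\otimes_{k(X')}k(X)\to\Omega^1_{k(X)/k}$ has kernel the line spanned by $d(g^p)$ and image the $k(X)$-span of $\{d\phi:\phi\in k(X')\}$; hence whenever the source expression is a nonzero element of $\Omega^m_{k(X)/k}$, the $df_i$ form a basis of that $(m-1)$-dimensional image, and the target expression is nonzero in $\Omega^m_{k(X')/k}\otimes k(X)$. A short differential computation then gives well-definedness: using that $d$ annihilates $p$-th powers and that $D(g')/D(g)=\sum_jjc_jg^{j-1}$ for $g'=\sum_jc_jg^j$ with $c_j\in k(X')$, replacing $g$ by $g'$ multiplies source and target by the same factor; and replacing the $f_i$ multiplies source and target by a single determinant, which is the same whether computed in $\Omega^1_{k(X)/k}$ or in $\Omega^1_{k(X')/k}$ because the ambiguous component along the kernel line $d(g^p)$ does not affect the top exterior power. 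Being $k(X)$-linear and nonzero, the map is an isomorphism of the one-dimensional generic stalks.

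For the second stage, fix $E\subset X$ and pass to the DVRs $R'=\cO_{X',\pi(E)}\subset R=\cO_{X,E}$, with completions $F'[[t']]\subset F[[t]]$ by Cohen's theorem. Exactly one of two cases occurs: (a) $R$ is ramified over $R'$, so after absorbing a unit $t'=t^p$, the residue extension is trivial, $R^D=F[[t^p]]$, $D$ kills a Cohen lift of $F$, and $\ord_E D(t)=\ord_E\divisorialfix{D}$; or (b) $R$ is inert over $R'$, so $t'=t\in R'$ (hence $D(t)=0$), $F/F'$ is purely inseparable of degree $p$, $R^D=F'[[t]]$, and $\ord_E D(\xi)=\ord_E\divisorialfix{D}$ for the generator $\xi$ of $F/F'$. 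Choosing $g=t$ in case (a) and $g=\xi$ in case (b), with $f_1,\dots,f_{m-1}$ a suitable lift of a $p$-basis of $F$ (resp.\ of $F'$, together with $t$), one writes down explicit generators of $\Omega^m_{R/k}$ and of $\Omega^m_{R'/k}$ — the latter pulled back to $R$, where $d(g^p)$ generates the kernel of $\pi^*\Omega^1_{X'/k}\to\Omega^1_{X/k}$ — and checks by inspection that the formula sends, up to a unit, a generator of $\Omega^m_{R/k}(\divisorialfix{D})$ to a generator of $(\pi^*\Omega^m_{R'/k})(p\divisorialfix{D})$; the hypothesis $D(g)^{-1}\in\cO_X(\Delta)$ guarantees $\ord_E D(g)=\ord_E\divisorialfix{D}$, which is what makes the powers of a local equation of $E$ match on the two sides. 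Hence the map of the first stage is an isomorphism of invertible sheaves.

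Taking first Chern classes gives $K_X+\Delta=\pi^*K_{X'}+p\Delta$, i.e.\ $K_X=\pi^*K_{X'}+(p-1)\Delta$. For the second isomorphism, on a local generator $f_0\,df_1\wedge\dots\wedge df_{m-1}\wedge D(g)^{-1}dg$ with moreover $D(f_0)=0$ one computes $L_D=0$ on the source, while the target is visibly $D$-invariant; by $\cO_X$-linearity this propagates to $D$-equivariance for the Lie-derivative action on $\Omega^m_{X/k}(\Delta)$ and the action $1\otimes D$ on $(\pi^*\Omega^m_{X'/k})(p\Delta)=\pi^*\bigl(\Omega^m_{X'/k}(\pi_*\Delta)\bigr)$, where $\pi^*\pi_*\Delta=p\Delta$ (checked at each $E$ by multiplying ramification index and residue degree). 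Applying $(\pi_*(\,\cdot\,))^D$ and using $(\pi_*\pi^*\cG)^D=\cG$ for locally free $\cG$ on $X'$ yields $(\pi_*\Omega^m_{X/k}(\Delta))^D\cong\Omega^m_{X'/k}(\pi_*\Delta)$. I expect the main obstacle to be the second stage: the codimension-one case analysis, and especially keeping the power of $\Delta$ correct in the inert case (b), where the failure of $\Omega^1_{k(X')/k}\to\Omega^1_{k(X)/k}$ to be injective is exactly what must be balanced against $(p-1)\Delta$ — together with the bookkeeping ensuring that the explicit formula, rather than merely some abstract isomorphism, realizes the identification, which requires choosing near each $E$ a single $g$ adapted simultaneously to the local coordinates of $X$, of $X'$, and to $D$.
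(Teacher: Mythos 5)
This proposition is not proved in the paper at all: it is imported verbatim from \cite{Matsumoto:k3alphap}*{Proposition 2.9}, so there is no in-paper argument to compare yours against. Judged on its own, your sketch is the natural proof and I believe it is essentially correct: the generic-point well-definedness computation (invariance under $g \mapsto g'$ via $D(g')/D(g) = \sum_j j c_j g^{j-1}$ and $d(g'^p) = (D(g')/D(g))^p\, d(g^p)$, and the determinant argument modulo the kernel line $d(g^p)$) is right; the codimension-one dichotomy is legitimate because $\pi$ is finite flat of degree $p$, so $ef = p$ forces exactly the ramified and inert cases; and the observation that $D(g)^{-1} \in \cO_X(\Delta)$ together with $D(g) \in (\Image D)$ pins down $\ord_E D(g) = \ord_E \divisorialfix{D}$ is exactly the point that makes the twists match.

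Two places deserve more care than the sketch gives them. First, in the inert case (b) you need not just Cohen's theorem but \emph{compatible} coefficient fields for the imperfect residue extension $F/F'$, and you need $\hat R = \hat R'[\xi]$ with $\xi^p \in \hat R'$ so that $(\Image D) = (D(\xi))$; this is the structural fact (a degree-$p$ purely inseparable flat extension of regular local rings is monogenic of the form $R'[T]/(T^p - a)$) that carries the real content of the local analysis, and it should be stated and justified rather than absorbed into ``by Cohen's theorem.'' Second, in the last step the phrase ``by $\cO_X$-linearity this propagates to $D$-equivariance'' is not quite right, since neither the Lie-derivative action nor $1 \otimes D$ is $\cO_X$-linear; what propagates the identity from one local generator to all sections is the Leibniz rule (both actions are $\cO_{X'}$-linear derivations of the $\cO_X$-module structure), and your verification that $L_D$ kills the source generator does require the small computation $df_1 \wedge \dots \wedge df_{m-1} \wedge du = (D(u)/D(g))\, df_1 \wedge \dots \wedge df_{m-1} \wedge dg$ for $u = D(g)$, which you should make explicit. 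Neither issue is a wrong turn; both are standard, but they are where the proof actually lives.
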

See \cite{Matsumoto:k3alphap}*{Section 2.1} for the definition of the action of $D$ on $m$-forms.

\begin{rem}
This isomorphism (although it depends on the choice of $D$) 
may be considered as an analogue of the natural pullback isomorphism 
$\Omega^m_{X/k} \cong \pi^* \Omega^m_{X'/k}(R)$
for a finite flat morphism $\map{\pi}{X}{X'}$ between smooth varieties,
where $R$ is the ramification divisor which is $0$ if and only if $\pi$ is \'etale.
Thus, the existence of $D$ with $\codim \Fix(D) \geq 2$ might be considered as an inseparable analogue of codimension $1$ \'etaleness.
However, this is not a local property, and not compatible with composition, as the following example shows.
Also, if the divisor $\divisorialfix{D}$ is nonzero, then it is not defined canonically from $\pi$ but only up to linear equivalence, 
\end{rem}

\begin{example} \label{ex:kummer}
	Suppose $\charac k = p > 2$.
	Let $\map{\pi}{A_1}{A_2}$ be a purely inseparable isogeny of degree $p$ between abelian surfaces,
	$\map{f_i}{A_i}{X_i = A_i / \set{\pm 1}}$ the quotient morphisms,
	and $\map{\pi'}{X_1}{X_2}$ the purely inseparable morphism of degree $p$ induced by $\pi$.
	Clearly $f_1$ and $f_2$ are \'etale in codimension $1$
	and $\pi$ is induced by a derivation $D$ on $A_1$, which is unique up to scalar and fixed-point-free.
	This $D$ satisfies $[-1]^* D = - D \neq D$ and hence does not descend to $X_1$.
	Hence there is no regular derivation on $X_1$, even on $X_1^{\sm}$, inducing $\pi'$.
\end{example}

\section{Proof of classification} \label{sec:proof}

In this section we prove Theorem \ref{thm:main}.
In Section \ref{subsec:reduction} we reduce the proof to the following 
involved cases, which will be treated in individual sections.

\begin{itemize}
	\item $p = 3$, (\ref{case:point}) and (\ref{case:point}$'$) hold, both $B$ and $B'$ are either of type $E_8^0$ or $E_8^1$, and
	both $h$ and $h'$ are non-units at every upper fixed point (see Section \ref{subsec:reduction} for the definition) of $D$ and $D'$.
	(Section \ref{subsec:3E8})
	\item $p = 2$, (\ref{case:point}) and (\ref{case:point}$'$) hold, and 
	both $h$ and $h'$ are non-units of $B$ and $B'$. 
	(Section \ref{subsec:2})
	\item The divisorial part $\divisorialfix{D}$ of $\Fix(D)$ is not Cartier. 
	(Section \ref{subsec:non-principal})
\end{itemize}

\subsection{Reduction part} \label{subsec:reduction}

We first note that the conditions (\ref{case:empty}), (\ref{case:point}), (\ref{case:non-principal}) are 
pairwise exclusive, and that we can replace $D$ so that one of these holds.
Indeed, replacing $D$ changes $\Fix(D)$ precisely by a Cartier divisor.
Similar for $D'$.

The equivalence (\ref{case:non-principal}) $\iff$ (\ref{case:non-principal}$'$) 
and the equality $l = l'$
follow from Corollary \ref{cor:dual derivation}.
This case will be considered in Section \ref{subsec:non-principal}.
Assume this is not the case.

The following assertions hold.
\begin{enumerate}
\item \label{item:B' smooth then empty} 
If $B'$ is smooth then (\ref{case:empty}) holds
(by \cite{Matsumoto:k3alphap}*{Theorem 3.3(2)}).
\item \label{item:B smooth then empty'}
Dually, if $B$ is smooth then (\ref{case:empty}$'$) holds.
\item \label{item:empty and B RDP} 
Suppose (\ref{case:empty}) holds and $B$ is an RDP.
Then by \cite{Matsumoto:k3mun}*{Proposition 4.7},
there is $F \in k[[x,y,z^p]]$ such that 
$B \cong k[[x,y,z]]/(F)$, $D = \partialdd{z}$ up to unit multiple, and $B' = k[[x,y,z^p]] / (F)$,
and moreover all possible $F$ are classified.
The result is as in Table \ref{table:derivations on RDPs:non-fixed}.
\item \label{item:empty' and B' RDP} 
Dually, if (\ref{case:empty}$'$) holds and $B'$ is an RDP,
then we have a classification,
as in Table \ref{table:derivations on RDPs:non-fixed2}.
\item \label{item:empty and B RDP then nonempty'}
If (\ref{case:empty}) holds and $B$ is an RDP, then (\ref{case:point}$'$) holds.
This follows from the classification of assertion (\ref{item:empty and B RDP}).
\item \label{item:empty' and B' RDP then nonempty}
Dually, if (\ref{case:empty}$'$) holds and $B$ is an RDP, then (\ref{case:point}) holds.
\end{enumerate}

Among $2^4$ possibilities 
((\ref{case:empty}) or (\ref{case:point}), (\ref{case:empty}$'$) or (\ref{case:point}$'$), $B$ smooth or an RDP, $B'$ smooth or an RDP)),
$10$ are proved impossible by 
(\ref{item:B' smooth then empty}),
(\ref{item:B smooth then empty'}),
(\ref{item:empty and B RDP then nonempty'}), and
(\ref{item:empty' and B' RDP then nonempty}).
The case where $B$ and $B'$ are both smooth (and (\ref{case:empty}) and (\ref{case:empty}$'$) hold)) is done by the works cited in the statement of Theorem \ref{thm:main}(\ref{case:smooth}).
If (\ref{case:empty}) and (\ref{case:point}$'$) hold and $B$ is an RDP ($B'$ is either smooth or an RDP), then we have a classification by (\ref{item:empty and B RDP}).
Same for the dual cases.
Thus the only remaining case is where 
(\ref{case:point}) and (\ref{case:point}$'$) hold and $B$ and $B'$ are both RDPs.
Hereafter we consider this case.

If $D$ is of multiplicative type (i.e.\ $h = 1$)
then such derivations correspond to $\mu_p$-actions and are classified in \cite{Matsumoto:k3mun}*{Proposition 4.9}.
More generally, if $h \in B^*$, then we may replace $D$ with $h^{-1/(p-1)} D$,
which is of multiplicative type (since $D(h) = 0$ by Lemma \ref{lem:Dh=0}).
Dually, we have a classification for the cases where $h' \in B'^*$.
Hereafter we assume $h \in \idealm$ and $h' \in \idealm'$.

By \cite{Matsumoto:k3alphap}*{Corollary 3.5}, 
there are partial resolutions $\map{f}{\tilde{X}}{X}$ and $\map{f'}{\tilde{X'}}{X'}$ 
with a morphism $\map{\tilde{\pi}}{\tilde{X}}{\tilde{X}'}$
as in the diagram 
\[
\begin{tikzcd}
\tilde{X} \arrow[r,"\tilde{\pi}"] \arrow[d,"f"] & \tilde{X'} \arrow[d,"f'"] \\
X \arrow[r,"\pi"] & X' ,
\end{tikzcd}
\]
and a derivation $\tilde{D}$ on $\tilde{X}$ 
with $\tilde{D} = D$ outside the exceptional loci, 
satisfying the following properties: 
$\tilde{X'} = \tilde{X}^{\tilde{D}}$,
$\divisorialfix{\tilde{D}} = 0$, 
$\Sing(\tilde{X}) \cap \tilde{\pi}^{-1}(\Sing(\tilde{X}')) = \emptyset$
(in particular, $\Sing(\tilde{X}) \cap \Fix(\tilde{D}) = \emptyset$),
and $\Fix(\tilde{D}) \neq \emptyset$. 
We will refer to the points of $\Fix(\tilde{D})$ as the \emph{upper fixed points} of $D$.

If $h \in \cO_{\tilde{X}, w}^*$ at some upper fixed point $w \in \Fix(\tilde{D})$,
then $h \in B \cap \cO_{\tilde{X}, w}^* = B^*$,
contradicting our assumption that $h \in \idealm$.
Thus we have $h \in \idealm_w$ for any $w \in \Fix(\tilde{D})$.
Since $B_w$ is smooth, we have the classification \cite{Matsumoto:k3alphap}*{Lemma 3.6(2)}
of RDPs $B'_w$ satisfying this.
Suppose $p \geq 3$.
Then there are very few possibilities:
$p = 3$ and $B'_w$ is either of type $E_6^0$ or $E_8^0$, 
or $p = 5$ and $B'_w$ is of type $E_8^0$.
In other words, $B'$ admits one of these RDPs as a partial resolution.
Since no Dynkin diagram contains $E_8$ strictly,
the only possibility is that $p = 3$ and $\Sing(B') \in \set{E_7^r, E_8^r}$.
But $E_7^1$ and $E_8^2$ are impossible 
since their partial resolutions are $E_6^1$, not $E_6^0$, by \cite{Matsumoto:k3rdpht}*{Lemma 4.9}. 
Also $E_7^0$ is impossible, since by replacing $B$ and $B'$ with their universal coverings 
we obtain a derivation with satisfying (\ref{case:empty}) or (\ref{case:point}) and with quotient of type $E_6^0$, which is impossible by above.
Hence $\Sing(B') \in \set{E_8^0, E_8^1}$. 
Applying the same argument to $D'$, we obtain the same conclusion for $\Sing(B)$.
Thus we are in the case to be discussed in Section \ref{subsec:3E8}.

In the remaining case we have:
$p = 2$,
(\ref{case:point}) and (\ref{case:point}$'$) hold, 
$B$ and $B'$ are both RDPs, 
$h \in \idealm$, and $h' \in \idealm'$.
Thus we are in the case to be discussed in Section \ref{subsec:2}.

Before ending this section, we show the following.

\begin{lem} \label{lem:delta}
	Suppose $\map{\pi}{\Spec B}{\Spec B'}$ is as in Theorem \ref{thm:main} and satisfies (\ref{case:point}).
	Let $\delta = \dim_k \Image (\idealm'/\idealm'^{2} \to \idealm/\idealm^{2})$.
	Then $\delta \in \set{0,1}$.
\end{lem}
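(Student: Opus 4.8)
The plan is to analyze the composite $\fm'/\fm'^2 \to \fm/\fm^2$ using the concrete structure of $\pi$ as a quotient by the $p$-closed derivation $D$, together with the hypothesis that $\Fix(D) = \{\fm\}$ (case (\ref{case:point})). First I would recall that $B = \Frac(B)^D \cap B'$-flavored descriptions are available, but more concretely: since $\pi$ has degree $p$, there is an element $t \in B$ with $B = B'[t]$, $t^p \in B'$, and $D(t) \in B'^*$ (indeed $D(t)$ generates the ideal $(\Image D)$ locally away from $\Fix(D)$, and $\Fix(D) = \{\fm\}$ means this ideal is $\fm$-primary, so in particular $D(t)$ is a regular parameter situation on the smooth locus). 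The key point is to bound the dimension of the image of $\fm'/\fm'^2$ inside $\fm/\fm^2$ from below by $1$ and from above by $1$, i.e.\ rule out $\delta = 2$ and note $\delta = 0$ is possible (corresponding to $B' \to B$ inducing the zero map on cotangent spaces mod squares).

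The main steps I would carry out: (1) Observe $\fm = \fm' B + (t)$, so $\fm/\fm^2$ is spanned by the image of $\fm'$ together with the class of $t$; hence $\dim_k \fm/\fm^2 \le \delta + 1$. (2) Since $B$ is at most an RDP, $\dim_k \fm/\fm^2 \in \{2,3\}$ (embedding dimension $2$ if smooth, $3$ if a genuine RDP). (3) Likewise $\dim_k \fm'/\fm'^2 \in \{2,3\}$. Combining (1)–(3) forces $\delta \ge 1$. (4) For the upper bound $\delta \le 1$: I would argue that $\fm'^2$ already contains ``most'' of $\fm^2$-relevant directions, more precisely that the cokernel of $\fm'/\fm'^2 \to \fm/\fm^2$ has dimension at least $1$. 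Here the cleanest route is to use the partial resolution diagram $\tilde X \to X$, $\tilde{X}' \to X'$ with $\tilde{X}' = \tilde{X}^{\tilde D}$, $\divisorialfix{\tilde D} = 0$, $\Fix(\tilde D) \ne \emptyset$ from \cite{Matsumoto:k3alphap}*{Corollary 3.5}: at an upper fixed point $w$, the map $\tilde{X} \to \tilde{X}'$ is the quotient of a smooth surface by a derivation with an isolated fixed point, whose local structure is completely explicit (it is, up to coordinates, $D = g\,\partial_x$ or similar with $g \in \fm_w$), and there the cotangent map $\fm_{\tilde\pi(w)}/\fm_{\tilde\pi(w)}^2 \to \fm_w/\fm_w^2$ has image of dimension exactly $1$ (the class of a $D$-invariant coordinate survives, the other does not since its image is a $p$-th power). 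Pushing this down: $t$ can be chosen so that $t \bmod \fm^2$ is \emph{not} in the image of $\fm'/\fm'^2$ (its $p$-th power lies in $\fm'^p \subset \fm'^2$ but $t$ itself is a genuine parameter), and then a counting/flatness argument with $B = B'[t]/(t^p - a)$, $a \in \fm'$, bounds $\delta$ above: any element of $\fm'$ whose image lies in $\fm^2$ must, modulo the relation, lie in $\fm'^2$ up to the $t$-direction, giving $\delta \le \dim_k \fm'/\fm'^2 - (\text{rank of the }t\text{-contribution}) $. I would make this precise by computing $\Omega^1_{B/k}$ versus $\Omega^1_{B'/k} \otimes_{B'} B$, which by Convention \ref{conv:RDP} agree outside $\fm$, to control the cotangent map in codimension $1$ and then extend over the point since both rings are normal with isolated singularity.

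The step I expect to be the main obstacle is the upper bound $\delta \le 1$, i.e.\ showing the cotangent map cannot be surjective (or near-surjective) in a way forcing $\delta = 2$. The subtlety is that $\fm/\fm^2$ could a priori be $3$-dimensional with the image of $\fm'/\fm'^2$ being $2$-dimensional; ruling this out requires genuinely using that $t$ contributes an \emph{independent} direction, which in turn rests on $D(t)$ being a unit on the smooth locus (so that $dt$ is not in the $B'$-span of pulled-back differentials there) combined with the normality of $B$ to extend the conclusion over $\fm$. If a slicker argument is wanted, one can instead invoke the explicit normal forms: all of the cases left to consider after the reductions have $B$ with $\dim \fm/\fm^2 = 3$ and $B'$ likewise, and in every such normal form $B = k[[x,y,z]]/(F) \supset B'$ with $B'$ generated over $k$ by at most three of $x^p, y^p, z, \dots$; inspecting generators shows at most one of them maps to a nonzero class in $\fm/\fm^2$ outside $\fm^2$, giving $\delta \le 1$ directly — but that is slightly circular if the normal forms are proved using this lemma, so I would prefer the intrinsic argument via the partial resolution and $D(t) \in \cO_{\tilde X}^*$ at upper fixed points.
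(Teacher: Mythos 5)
There is a genuine gap here, and part of what you assert is actually false. The pivot of your argument is the claim that $B = B'[t]$ for a single element $t$ with $t^p \in B'$, from which you deduce $\fm = \fm' B + (t)$, the inequality $\dim_k \fm/\fm^2 \leq \delta + 1$, and hence the lower bound $\delta \geq 1$. Purely inseparable degree-$p$ extensions of two-dimensional normal local rings are not monogenic in general. Take $B = k[[x,y]]$ with $D(x,y) = (x,-y)$ (the first line of Table \ref{table:derivations on RDPs:non-fixed2}): this is $p$-closed, $\Fix(D) = \set{\fm}$, so (\ref{case:point}) holds, and $B' = k[[x^p,y^p,xy]]$. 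Here $B/\fm' B = k[[x,y]]/(x^p,y^p,xy)$ has dimension $2p-1 > p$, so $B$ is not generated by one element over $B'$; moreover every generator of $\fm'$ lies in $\fm^2$, so $\delta = 0$. Thus your ``$\delta \geq 1$'' is wrong (you even note parenthetically that $\delta = 0$ is possible, contradicting your own step (1)--(3)). The same false monogenicity underlies your step (4), whose other ingredients also do not hold up: $D(t)$ cannot be a unit on all of $\Spec B \setminus \set{\fm}$ for a single $t$ when $(\Image D)$ is $\fm$-primary but not principal; the comparison of $\Omega^1_{B/k}$ with $\Omega^1_{B'/k} \otimes_{B'} B$ away from $\fm$ says nothing about the fibre $\fm/\fm^2$ at the closed point, which is exactly where they differ and which is what $\delta$ measures; and the passage from the cotangent computation at an upper fixed point of $\tilde{D}$ (which lies over the exceptional locus) back down to $\fm'/\fm'^2 \to \fm/\fm^2$ is not supplied. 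Your fallback via the normal forms is, as you say yourself, circular.

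The paper's proof is short and uses a different mechanism, so it is worth recording. The bound $\delta \leq 2$ is immediate: if $\fm'$ generated $\fm/\fm^2$ then $\fm' B = \fm$ by Nakayama and $B = B'$, contradicting degree $p$. If $\delta = 2$, choose coordinates with $\fm = (x,y,z)$ and $x,y \in \fm'$, so $\fm' = (x,y,z^p)$. Writing $B = k[[x,y,z]]/(F)$, one checks that $(F)$ must admit a generator lying in $k[[x,y,z^p]]$ (otherwise the subring generated by $x,y,z^p$ fails to be an RDP), whence $B' = k[[x,y,z^p]]/(F)$ and $D$ is proportional to $\partialdd{z}$. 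But $\partialdd{z}$ is fixed-point-free, contradicting $\Fix(D) = \set{\fm}$. So the hypothesis (\ref{case:point}) is used precisely to exclude $\delta = 2$; there is no lower bound to prove.
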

\begin{proof}
If $B$ is smooth, then $\delta < \dim \idealm/\idealm^2 = 2$ since $\idealm'$ cannot generate $\idealm$.

Suppose $B$ is an RDP.
	We have $\delta < \dim \idealm/\idealm^2 = 3$.
	Assume $\delta = 2$.
	We may assume $\idealm = (x, y, z)$ and $\idealm' = (x, y, z^p)$.
	If the ideal $(F) = \Ker (k[[x,y,z]] \to B)$ does not have a generator that belongs to $k[[x,y,z^p]]$, 
	then $B' = k[[x,y,z^p]] / (F^p)$,
	but this cannot be normal.
	Hence $(F)$ is generated by an element of $k[[x,y,z^p]]$, which we may assume to be $F$ itself,
	and $B' = k[[x,y,z^p]] / (F)$.
	Then $D$ is proportional to $\partialdd{z}$, contradicting the assumption (\ref{case:point}). 
\end{proof}
It turns out that $\delta$ is always $1$ if $B$ is an RDP, but we can prove this only after case-by-case arguments.

\subsection{Case of $E_8$ in $p = 3$} \label{subsec:3E8}

Suppose $p = 3$, both $B$ and $B'$ are either of type $E_8^0$ or $E_8^1$, and
both $h$ and $h'$ are non-units at every upper fixed point of $D$.
By the arguments in Section \ref{subsec:reduction},
there is exactly one upper fixed point of $D$, and its quotient is of type $E_6^0$.
Let $X_1 \to X$ be the blow-up at the closed point.
Then $\Sing(X_1) = E_7^0$.
Since there is no derivation on $E_7^0$ fixing precisely the closed point with non-unit $h$ and RDP quotient,
this point is not fixed, and the upper fixed point should lie above $X_1^{\sm}$.
In particular, there exists a unique fixed point on $X_1^{\sm}$.

Suppose $B$ is of type $E_8^1$.
We may assume that $B = k[[x,y,z]] / (z^2 + x^3 + y^5 + x^2 y^3)$.
Then the space of derivations on $B$ are generated by the following elements $D_1, D_2, D_3$.
Then $D$ has at least two fixed points $(x = 1 + y_1^2 = z_1 = 0)$ 
on $\Spec B[y/x,z/x] = \Spec k[[x]][y_1,z_1] / (z_1^2 + x + x^3 y_1^3 (y_1^2 + 1)) \subset X_1$, contradiction.

Dually, $B'$ is not of type $E_8^1$.

\begin{tabular}{llll}
\toprule
            & $D_1$  & $D_2$   & $D_3$ \\
\midrule
$x$         & $0$    & $z$     & $y$  \\
$y$         & $z$    & $0$     & $-x$ \\
$z$         & $-y^4$ & $-xy^3$ & $0$  \\
\midrule
$x$         & $0$          & $x z_1$            & $x y_1$   \\
$y_1 = y/x$ & $z_1$        & $- y_1 z_1$        & $-1 - y_1^2$ \\
$z_1 = z/x$ & $-x^3 y_1^4$ & $-x^3 y_1^3 - z_1^2$ & $- y_1 z_1$ \\
\bottomrule
\end{tabular}

\medskip

Now suppose $B$ and $B'$ are of type $E_8^0$.
\begin{claim}
$D \restrictedto{\idealm/\idealm^2}$ is nilpotent of index $3$
and $\delta = 1$, where $\delta$ is as in Lemma \ref{lem:delta}.
\end{claim}
\begin{proof}[Proof of Claim]
	Since we know that $\delta \leq 1$ by Lemma \ref{lem:delta}
	and that $(D \restrictedto{\idealm/\idealm^2})^3 = 0$ since $h \in \idealm$,
	it remains to show that $\delta \geq 1$ and $(D \restrictedto{\idealm/\idealm^2})^2 \neq 0$.

	We write $B = \Spec k[[x,y,z]] / (z^2 + x^3 + y^5)$
	during this proof.
	Then the space of derivations on $B$ are generated by the following elements $D_1, D_2$.
	
	\begin{tabular}{lll}
		\toprule
		& $D_1$ & $D_2$ \\
		\midrule
		$x$ & $0$    & $1$ \\
		$y$ & $z$    & $0$ \\
		$z$ & $-y^4$ & $0$ \\
		\midrule
		$x$         & $0$           & $1$ \\
		$y_1 = y/x$ & $z_1$         & $- y_1/x$ \\
		$z_1 = z/x$ & $- x^3 y_1^4$ & $- z_1/x$ \\
		\midrule
		$x_2 = x/y$ & $- x_2 z_2$     & $1/y$ \\
		$y$         & $y z_2$         & $0$ \\
		$z_2 = z/y$ & $- y^3 - z_2^2$ & $0$ \\
		\bottomrule
	\end{tabular}
	
	Write $D = f_1 D_1 + f_2 D_2$ with $f_i \in B$,
	and suppose $f_i \equiv f_{i0} + f_{i1} x + f_{i2} y + f_{i3} z \pmod{\idealm^2}$ with $f_{ij} \in k$.
	Since $D$ extends to $X_1$, we have $f_2 \in \idealm$ ($f_{20} = 0$).
	Since $h \not\in B^*$, we have $f_{21} = 0$.
	Since $D$ does not fix the origin of 
	$\Spec B[x/y, z/y] = \Spec k[[y]][x_2,z_2] / (z_2^2 + x_2^3 y + y^3) \subset X_1 = \Bl_{\idealm} X$,
	we have $f_{22} \neq 0$.
	Let $\idealm_1 = (y_1, z_1)$ be the maximal ideal at the origin of 
	$\Spec B[y/x, z/x] = \Spec k[[x]][y_1,z_1] / (z_1^2 + x + x^3 y_1^5) \subset X_1$,
	which is a fixed point of $D$.
	Since the image of this point is of type $E_6^0$, we have $D(\idealm_1) \not\subset \idealm_1^2$,
	hence $f_{10} \neq 0$. 
	Then $v := D(D(x)) - hx$ satisfies
	$D(v) = (D^3 - h D)(x) = 0$ (by Lemma \ref{lem:Dh=0}), hence $v \in \Image (\idealm' \to \idealm)$, and 
	$v \equiv D(D(x)) \equiv D(f_{22} y + f_{23} z) \equiv f_{10} f_{22} z \not\equiv 0 \pmod{\idealm^2}$.
\end{proof}

We note that if $(B = k[[x,y,z]]/(F),\idealm)$ is an RDP of type $D_n$ or $E_n$ in characteristic $\geq 3$ and
$D$ is a derivation on $B$ with $D(\idealm) \subset \idealm$ and $D \restrictedto{\idealm/\idealm^2}$ is nilpotent of index $3$,
then $F \equiv l^2 \pmod{\idealm^3}$ with $l \in \Ker(D \restrictedto{\idealm/\idealm^2})$
(otherwise $D(F)$ cannot be zero).

\medskip

We will show that $B$ admits elements $x,y,z,w$ as in the statement of Theorem \ref{thm:main}(\ref{case:both fixed:principal}).
We may assume $\idealm = (x,y,z)$, $z \in B'$,
and $\idealm' = (Y, z, w)$, where $Y := y^3$.
We may moreover assume $x^3 \in \idealm'^2$ and $w \in \idealm^2$.
Write $w = Q(z,y,x) \in (z,y,x)^2 \subset k[[z,y,x]]$.
Since $x^3 \in \idealm'^2$, there exists $u \in k[[x,y,z]]$ and $P \in (Y,z,w)^2 \subset k[[Y,z,w]]$ 
such that $u F = - x^3 + P(Y, z, Q(z,y,x))$.
If $u \not\in k[[x,y,z]]^*$,
then the degree $2$ part of $F$ cannot be the square of a linear term contained in 
$\Ker(D \restrictedto{\idealm/\idealm^2}) = k z$, contradicting the observation above.
Hence $u \in k[[x,y,z]]^*$, and we may assume $u = 1$: $F = - x^3 + P(Y, z, Q(z,y,x))$,
and also $F' = -w^3 + \tilde{Q}(z^3, Y, P(Y,z,w))$,
where $\tilde{Q}(z^3,Y,P) := Q(z,y,x)^3$. 

We will say that a polynomial \emph{has} a monomial 
if the corresponding coefficient is nonzero.

Since $B$ and $B'$ are of type $E_8$, we obtain the following:
$P$ has $z^2$; $Q$ has $y^2$;
we may assume $P$ does not have $zw$ nor $w^2$;
we may assume $Q$ does not have $yx$ nor $x^2$;
$P$ has $Yw$; $Q$ has $zx$.
By replacing $x,y,z,w,F,F'$ with scalar multiples,
we may assume $F = -x^3 + z^2 + y^3 w + (\dots)$, $F' = (-w + y^2 + z x + (\dots))^3$.
Since $0 = D(w) = D(x) Q_x + D(y) Q_y$
and since $Q_x \equiv z \pmod{\idealm^2}$
and $Q_y \equiv 2 y \pmod {(z) + \idealm^2}$ have no common factor,
we may assume $(D(x), D(y)) = (- Q_y, Q_x)$.

\subsection{Case of characteristic $2$} \label{subsec:2}

Suppose $p = 2$ and 
both $h$ and $h'$ are non-units. 

In this case, there are still many possibilities for the quotients of upper fixed  points, hence the geometric arguments used in Section \ref{subsec:3E8} are not effective.
Instead, we will determine the equations directly (up to terms of high degree).

As in Lemma \ref{lem:delta},
we let $\delta = \dim_k \Image (\idealm'/\idealm'^{2} \to \idealm/\idealm^{2})$, 
and similarly $\delta' = \dim_k \Image (\secondpower{\idealm}/(\secondpower{\idealm})^{2} \to \idealm'/\idealm'^{2})$.
We have $\delta, \delta' \in \set{0,1}$.
We shall show $\delta = \delta' = 1$.

	Assume $\delta = 0$, that is, $\idealm' \subset \idealm^{2}$.
	Since $x^2,y^2,z^2 \in \secondpower{\Frac B} \subsetneq \Frac B'$, 
	the elements $x^2,y^2,z^2 \subset \secondpower{\idealm} \subset \idealm'$ cannot generate $\idealm'$.
	In other words, there exists a nonzero linear combination $f$ of $x^2,y^2,z^2$ that belongs to $\idealm'^2$.
	Since $f \in \idealm'^2 \subset \idealm^4$,
	(a unit multiple of) $F$ is of the form $f + G$ with $f \in \secondpower{\idealm}$ and $G \in \idealm^4$.
	But such a polynomial cannot define an RDP.
Hence $\delta = 1$. Dually $\delta' = 1$.

	We may assume that $\idealm = (x, y, z)$ and $\idealm' = (w, y^2, z)$, 
	and that $x^2 \in \idealm'^2$ and $w^2 \in (\secondpower{\idealm})^2$.

	Since $x^2 \in \idealm'^2$, we may assume $F = x^2 + P_0(y^2, z) + P_1(y^2, z) w$
	with $P_0 \in \idealn'^2$ and $P_1 \in \idealn'$,
	where $\idealn'$ is the maximal ideal $(y^2, z)$ of $k[[y^2, z]]$.
	Similarly we may assume $w = Q_0(z, y) + Q_1(z, y) x$ with $Q_0 \in \idealn^2$ and $Q_1 \in \idealn$, 
	where $\idealn$ is the maximal ideal $(z, y)$ of $k[[z, y]]$.
	Hence we have 
	\begin{align*}
		F  &= x^2 + P_0(y^2, z) + P_1(y^2, z) (Q_0(z, y) + Q_1(z, y) x), \\
		F' &= w^2 + \tilde{Q}_0(z^2, Y) + \tilde{Q}_1(z^2, Y) (P_0(Y, z) + P_1(Y,z) w),
	\end{align*}
	where $Y := y^2$, and $\tilde{Q}_i(z^2, Y) := Q_i(z, y)^2$. 
	
	As in Section \ref{subsec:3E8}, 
	we will say that a polynomial \emph{has} a monomial 
if the corresponding coefficient is nonzero.

	We may assume $P_0$ does not have $z^2$, and $\tilde{Q}_0$ does not have $Y^2$. 
	
	We show that $Q_1$ does not have $y$.
	For this, consider the derivation $D_1$ defined by $D_1(x,y,z) = ((Q_0)_y + (Q_1)_y x, Q_1, 0)$.
	Then it satisfies $B^{D_1} = B'$, hence $D_1 = a D$ for some $a \in B \setminus \set{0}$, and $D_1^2 = (Q_1)_y D_1$.
	By Corollary \ref{cor:h unit} we obtain $(Q_1)_y \in \idealm$.
	Similarly, $P_1$ does not have $z$.

	For $F$ to define an RDP, we need either 
	\begin{itemize}
	\item $P_0$ has $y^2 z$ (then we may assume $P_0$ does not have $z^3$), $P_1$ has $z^k$, and either $Q_0$ has $z^m y$ or $Q_1$ has $z^l$, or
	\item $P_0$ does not have $y^2 z$ but has $z^3$, $P_1$ has $y^2$, and $Q_0$ has $zy$ or $y^3$.
	\end{itemize}
	Similarly, for $F'$ to define an RDP, we need either
	\begin{itemize}
		\item $\tilde{Q}_0$ has $z^2 Y$ (then we may assume $\tilde{Q}_0$ does not have $Y^3$), $\tilde{Q}_1$ has $Y^{k'}$, and either $P_0$ has $Y^{m'} z$ or $P_1$ has $Y^{l'}$, or
		\item $\tilde{Q}_0$ does not have $z^2 Y$ but has $Y^3$, $\tilde{Q}_1$ has $z^2$, and $P_0$ has $Yz$ or $z^3$.
	\end{itemize}
	Combining these conditions, one of the following holds.
	\begin{itemize}
		\item $P_0$ has $y^2 z$ and does not have $z^3$, $P_1$ has $z^{m'}$, $Q_0$ has $zy$ and does not have $y^3$, and $Q_1$ has $y^{m}$.
		In this case, $B$ is of type $D_{2m'+2}^{\positive{2-m}}$
		and $B'$ is of type $D_{2m+2}^{\positive{2-m'}}$.
		\item $P_0$ does not have $y^2 z$ but has $z^3$, $P_1$ has $y^2$, $Q_0$ has $zy$ and does not have $y^3$, and $Q_1$ has $y^m$.
		In this case, $B$ is of type $E_7^{\positive{3-m}}$ and $B'$ is of type $D_{2m+3}^0$.
		\item Dual of the previous case:
		$B$ is of type $D_{2m+3}^0$ and $B'$ is of type $E_7^{\positive{3-m}}$.
		\item $P_0$ does not have $y^2 z$ but has $z^3$, $P_1$ has $y^2$, 
		$Q_0$ does not have $zy$ but has $y^3$, and $Q_1$ has $z$.
		In this case, $B$ and $B'$ are of type $E_8^2$.
	\end{itemize}

\subsection{Non-Cartier case} \label{subsec:non-principal}

Suppose the divisor $\divisorialfix{D}$ is not Cartier.
Let $l$ be the order of the class $[\divisorialfix{D}]$ in $\Cl(B)$. 
We have $l \divides (p-1)$ by the Rudakov--Shafarevich formula (Proposition \ref{prop:2-forms}).
Let $\map{f}{\bar{X} = \Spec \bar{B}}{X = \Spec B}$ 
be the covering (\'etale outside the closed point) trivializing the class,
and let $\bar{\idealm} \subset \bar{B}$ be the maximal ideal.
Let $\bar{D}$ be a derivation proportional to $f^* D$ with $\divisorialfix{\bar{D}} = 0$ 
(which exists since $[\divisorialfix{f^* D}] = 0$).
Here the pullback $f^* D$ is first defined on the \'etale locus of $f$, and then extended to $\bar{X}$ since $f$ is \'etale in codimension $1$.
Let $\bar{X}' = \Spec \bar{B}' = \bar{X}^{\bar{D}}$.

Let $G = \Gal(\bar{X}/X) = \Gal(\bar{X'}/X') \cong \bZ/l\bZ$.
For $g \in G$, the derivation $g^* \bar{D}$ 
can be written as $g^* \bar{D} := g^{\circ} \bar{D} (g^{\circ})^{-1}$,
where $\map{g^{\circ}}{\bar{B}}{\bar{B}}$ corresponds (contravariantly) to $\map{g}{\bar{X}}{\bar{X}}$.
Since $\bar{B}^{g^* \bar{D}} = \bar{B}^{\bar{D}}$ and $\divisorialfix{g^* \bar{D}} = 0 = \divisorialfix{\bar{D}}$, 
there exists $\beta_g \in \bar{B}^*$ such that $g^* \bar{D} = \beta_g \cdot \bar{D}$.
We have $\beta_{hg} = g^{\circ}(\beta_h) \beta_g$.
This shows that ($\beta$ is a $1$-cocycle and),
since $G \actson \bar{B}/\bar{\idealm} = k$ is trivial,
the map $\rho \colon G \namedto{\beta} \bar{B}^* \to (\bar{B}/\bar{\idealm})^* = k^*$ is a homomorphism.

We will show that this $\map{\rho}{G}{k^*}$ is injective.
Let $G_1 := \Ker \rho$. 
Let $\bar{D}_1 := \sum_{h \in G_1} h^* \bar{D} = (\sum_{h \in G_1} \beta_h) \cdot \bar{D}$.
Then $\bar{D}_1$ is $G_1$-invariant,
hence descends to a derivation on $\bar{B}^{G_1}$.
Since $\sum_{h \in G_1} \beta_h \in \bar{B}^*$ 
(since $\sum_{h \in G_1}  \beta_h \equiv \sum 1 = \abs{G_1} \not\equiv 0 \pmod{\bar{\idealm}}$),
$\bar{D}_1$ has no divisorial fixed locus.
In other words, the pullback of the class $[\divisorialfix{D}]$ to $\bar{X}/G_1$ is trivial.
By the definition of $\bar{X}$, we obtain $G_1 = \set{1}$.

Summarizing the arguments so far, 
the at most RDP $\bar{X} = \Spec \bar{B}$, 
the action of the cyclic group $G$, and the derivation $\bar{D}$ satisfy the following properties.
\begin{itemize}
	\item $\bar{X} \to \bar{X}/G = X$ is induced by a non-trivial divisor class
	of order dividing $p-1$ on an at most RDP $X$.
	Such $\bar{X} \to X$ can be listed from the list of RDPs and their class groups.
	\item $\bar{D}$ is $p$-closed with $\divisorialfix{\bar{D}} = 0$, and $\bar{X}^{\bar{D}}$ is at most RDP.
	Such $\bar{X} \to \bar{X}^{\bar{D}}$ is classified in the cases (\ref{case:empty}) and (\ref{case:point}) of Theorem \ref{thm:main}.
	\item For each $g \in G$ we have $g^* \bar{D} = \beta_g \cdot \bar{D}$ for some unit $\beta_g \in \bar{B}^*$,
	and the homomorphism $\map{\rho}{G}{k^*} \colon {g} \mapsto {(\beta_g \bmod \bar{\idealm})}$ is injective.
\end{itemize}

The first two conditions imply that $(\Sing(B), \Sing(\bar{B}), \Sing(\bar{B}^{\bar{D}}))$ 
is one in Table \ref{table:covering} (with $n \geq 1$),
that $(\bar{B}, G) \cong (k[[x,y,z]] / \bar{F}, \spanned{g})$ as in the table,
and that $(\bar{B}, \bar{D}) \cong (k[[x,y,z]] / (\bar{F}), u \bar{D})$ as in the table
with $u \in \bar{B}^*$,
but not necessarily under the same isomorphism $\bar{B} \cong k[[x,y,z]]/(\bar{F})$. 
We show that:
\begin{itemize}
\item \label{item:4 realizable cases}
Among the candidates in Table \ref{table:covering},
those without check marks are inappropriate.
\item \label{item:description}
In the other cases (with check marks),
we have an isomorphism $\bar{B} = k[[x,y,z]]/(\bar{F})$ 
with $\bar{F}$ and the both actions of $\bar{D}$ and $G$ as in the table,
up to replacing $\bar{D}$ by a unit multiple.
\end{itemize}
Then it is straightforward 
to compute $B = \bar{B}^G$
and the restrictions to $B$ of suitable multiples of $\bar{D}$,
which gives the formulas as in Table \ref{table:derivations on RDPs:non-principal}.

First consider the first $4$ cases of Table \ref{table:covering},
where $\bar{D}$ satisfies (\ref{case:empty}).
Let $V = \bar{\idealm} / \bar{\idealm}^2$ and $V' = \Image(\bar{\idealm}' \to V)$.
These spaces are stable under $G$, hence $G$ acts on $V/V'$, which is $1$-dimensional by the classification.
The linear map ${V/V'} \to \bar{B}/\bar{\idealm} = k$ induced by $\bar{D}$ is nonzero (since (\ref{case:empty})),
and is $G$-equivariant where the action on $k$ is $\rho^{-1}$.

In the case where $\bar{B}$ is smooth,
a generator $g$ of $G$ acts on $V/V'$ by a primitive $l$-th root $\zeta$ of $1$ and on $V'$ by $\zeta^{-1}$.
We obtain lifts $y \in \bar{\idealm}'$ and $x \in \bar{\idealm}$ with $g(y) = \zeta^{-1} y$ and $g(x) = \zeta x$.

In the other $3$ cases where $\bar{B}$ is of type $A_{np-1}$ there is,
since $np \geq p \geq 3$, a canonical $2$-element set $\set{kx, ky} \subset \bP(V)$ 
characterized by the property that $xy \in \bar{\idealm}^3$.
Then the order of the action of $G$ on $V/V'$ is $1,2,2$ respectively, by the classification.
Since this should be equal to $l = l,2,4$, the first and the third cases are inappropriate.
Consider the case where $\bar{B}$ is of type $A_{np-1}$ and $B$ is of type $D_{np/2+2}$, with $n$ even.
We sketch how to find $x,y,z$.
We may assume the nontrivial element $g \in G$ interchanges $x$ and $y$.
We take $z$ such that $x,y,z$ generate $\bar{\idealm}$ and $g(z) = -z$.
Since $\bar{B}$ is of type $A_{np-1}$, we have $xy = c z^{np}$ ($c \in k^*$) up to terms of high degree.
By a standard argument we can modify $x,y,z$ so that the actions of $G$ and $\bar{D}$ are as above and they satisfy $xy = z^{np}$. We omit the details.

The next $3$ cases of Table \ref{table:covering},
where $\bar{B}$ is of type $A_{n-1}$ and $\bar{B}'$ is of type $A_{pn-1}$, are simply the dual of the previous cases.

Finally, consider the case $\bar{B}$ is of type $E_6^1$ in characteristic $3$.
As before, let $V = \bar{\idealm} / \bar{\idealm}^2$.
The line $kz \in \bP(V)$ is characterized by $z^2 \in \bar{\idealm}^3$,
and we can take a lift $z \in \bar{\idealm}$ satisfying both $g(z) = -z$ and $D(z) = 0$.
We take $x,y \in \bar{\idealm}$ such that $x,y,z$ generate $\bar{\idealm}$, $g(x,y) = (y,x)$,
and $D(x,y) = (ix, -iy)$.
Since $\bar{B}$ is of type $E_6^1$, we have $z^2 + c (x^3 + y^3) + c' x^2 y^2 + (\text{higher}) = 0$ for some $c,c' \in k^*$.
We may assume $c = c' = 1$.
Again we omit the details to modify $x,y,z$.

\begin{table}
	\caption{Candidates for $(B, \bar{B}, \bar{B}^{\bar{D}})$}
	\label{table:covering}
	{ \scriptsize
		\begin{tabular}{lllllllll}
			\toprule
			& $p$ & $l$ & $B$ & $\bar{B}$ & $\bar{B}^{\bar{D}}$ & $\bar{F}$ & $\bar{D}$ & $g$ \\
			\midrule
			\checkmark & any & $l$ & $A_{l-1}$                & smooth    & smooth     & & $0, 1$ & $\zeta x, \zeta^{-1} y$ \\
			           & any & $l$ & $A_{lnp-1}$ & $A_{np-1}$ & $A_{n-1}$ & $xy - z^{np}$ & $0, 0, 1$ & $\zeta x, \zeta^{-1} y, z$ \\
			\checkmark & any & $2$ & $D_{np/2+2}$ ($n$ even) & $A_{np-1}$ & $A_{n-1}$ & $xy - z^{np}$ & $0, 0, 1$ & $y, x, -z$ \\
			           & any & $4$ & $D_{np+2}$ ($n$ odd) & $A_{np-1}$ & $A_{n-1}$ & $xy - z^{np}$ & $0, 0, 1$ & $y, -x, -z$ \\
			           & any & $l$ & $A_{ln-1}$  & $A_{n-1}$ & $A_{np-1}$ & $xy - z^n$ & $x, -y, 0$ & $\zeta x, \zeta^{-1} y, z$ \\
			\checkmark & any & $2$ & $D_{n/2+2}$ ($n$ even) & $A_{n-1}$ & $A_{np-1}$ & $xy - z^{n}$ & $x, -y, 0$ & $y, x, -z$ \\
			           & any & $4$ & $D_{n+2}$ ($n$ odd) & $A_{n-1}$ & $A_{np-1}$ & $xy - z^{n}$ & $x, -y, 0$ & $y, -x, -z$ \\
			\checkmark & $3$ & $2$ & $E_7^1$ & $E_6^1$ & $E_6^1$ & $z^2 + x^3 + y^3 + x^2 y^2$ & $x, -y, 0$ & $y, x, -z$ \\
			\bottomrule
		\end{tabular}
	}
\end{table}

\section{Smooth coverings of RDPs} \label{sec:smooth coverings}

In this section we construct smooth coverings of a certain type for every RDPs (Theorem \ref{thm:smooth covering}), 
which refines the result of Artin.
We also prove some variants (Corollary \ref{cor:isomorphic pi1} and Proposition \ref{prop:smooth covering etale last}).

\begin{thm} \label{thm:smooth covering}
	Suppose $B$ is a complete local RDP in characteristic $p > 0$.
	Then there exists
	a sequence $B \subset C_0 \subset C_1 \subset \dots \subset C_{n}$ ($n \geq 0$)
	of finite extensions of complete local algebras such that
	\begin{itemize}
		\item $C_{n}$ is smooth, and all other $C_i$ are RDPs,
		\item $\Spec C_0 \to \Spec B$ is the universal covering, and 
		\item $\Spec C_{i+1} \to \Spec C_{i}$ are purely inseparable of degree $p$, of type (\ref{case:point}),
	\end{itemize}
	if and only if $(p, \Sing(B)) \neq (2, E_8^1)$.
	If $B$ is of type $E_8^1$ in characteristic $2$, then we only have a weaker assertion where $C_i$ are not assumed to be RDPs.
\end{thm}

\begin{proof}
	First suppose $B$ is not of type $E_8^1$ in characteristic $2$.
	As mentioned in Convention \ref{conv:RDP}(\ref{conv:RDP:universal covering}),
	the universal covering of an at most RDP is again an at most RDP.
	Hence it suffices to consider the case $B$ is simply-connected.
	See Table \ref{table:pi1} (due to \cite{Artin:RDP}) for the fundamental groups and the universal coverings of RDPs.
	By Theorem \ref{thm:main} we have the following sequences.

	$A_{p^e-1} \from A_{p^{e-1}-1} \from \dots \from \smpoint$.
	
	For $p = 5$: $E_8^0 \from \smpoint$.
	
	For $p = 3$: $E_8^0 \from \smpoint$, $E_8^1 \from E_6^0 \from \smpoint$.
	
	For $p = 2$: 
	$E_8^3 \from E_7^2 \from D_5^{1/2} \from A_1 \from \smpoint$,
	$E_8^0 \from \smpoint$,
	$E_7^1 \from D_7^{1/2} \from \smpoint$,
	$E_7^0 \from \smpoint$,
	$D_{2m}^0 \from \smpoint$,
	$D_{2m+1}^{1/2} \from A_1 \from \smpoint$,
	$D_{4k-l}^{k-l/2} \from D_{4k-2l}^{\positive{k-l}} \from \dots \from 
	D_{4k-2^n l}^{0} \from \smpoint$ if $l > 0$ and $k - l/2 > 1/2$,
	where $n$ is the minimum integer with $4k - 2^{n-1} l \leq 0$.
	
	Note that the correspondence $(r, N) = (k-l/2, 4k-l)$ gives a bijection between the sets
	$\set{(r, N) \in \frac{1}{2} \bZ \times \bZ \mid N \geq 4, 2r - N \in \bZ, 0 \leq r \leq (N/2)-1}$ and
	$\set{(k, l) \in \bZ \times \bZ \mid k \geq 1, k \geq l/2, (k,l) \neq (1,1),(1,2)}$,
	and $4r < N$ if and only if $l > 0$.

\bigskip

	Now suppose $B$ is of type $E_8^1$ in characteristic $2$.
	Then $C_0 = B$ is not smooth and, 
	by Theorem \ref{thm:main},
	it does not admit any purely inseparable covering of degree $p$ that is at most RDP.
	Hence the assertion with the RDP assumption does not hold.
To prove the weaker assertion (without the RDP assumption),
we will give a sequence $B = C_0 \subset C_1 \subset C_2$
of purely inseparable coverings of degree $p$ of type (\ref{case:point})
such that ($C_1$ is a non-RDP and) $C_2$ is an RDP of another type.
Take $c \in k$ and let 
\begin{align*}
        C_2 &= k[[t,v,y]] / (v^2 (1 + t y^2) + (c + y) (y^4 v + t^2)) \\
\supset C_1 &= k[[x,y,z]] / (x^2 + z^3 + z y^2(y^2 (c + y) + z  x)), \\
\supset C_0 &= k[[z,Y,w]] / (w^2 + Y^3 + z^2 (z^3 + z Y w + c z Y^2)), 
\end{align*}
where 
\[
x = \frac{y^2 v + t^3}{1 + t y^2}, \quad 
z = \frac{y^4 v + t^2}{1 + t y^2}, \quad 
w = y^3 + x z, \quad 
Y = y^2.
\]
Define derivations $\map{\delta_1}{C_2}{C_2}$ and $\map{\delta'_2}{C_1}{C_1}$ by
\[
\delta_1(t, v, y) = (y^2, z, 0), \qquad
\delta'_2(x, y, z) = (y^2, z, 0).
\]
Then it is straightforward to check that 
$C_2$ is an RDP of type $D_8^0$ or $D_{11}^{1/2}$ if $c \neq 0$ or $c = 0$ respectively,
$C_0$ is an RDP of type $E_8^1$,
both $\delta_1$ and $\delta'_2$ are of additive type, 
both $\Fix(\delta_1)$ and $\Fix(\delta'_2)$ consist precisely of the closed point,
$C_1 = C_2^{\delta_1}$, and $C_0 = C_1^{\delta'_2}$.

For the (non-RDP) singularity of $C_1$, see Remark \ref{rem:covering of E_8^1}.
\end{proof}

\begin{cor} \label{cor:isomorphic pi1}
	Suppose $B_1$ and $B_2$ are at most RDPs (over the same algebraically closed field $k$) in characteristic $p > 0$
	having isomorphic local fundamental groups $\pietloc(B_1) \cong \pietloc(B_2)$.
	If $(p, \pietloc) \neq (2, \Dih_n)$ for any $n \geq 1$,
	then $B_1$ and $B_2$ are connected by a finite purely inseparable morphism.
\end{cor}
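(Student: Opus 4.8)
The plan is to reduce the statement to a uniqueness assertion for a suitable \emph{normal form} of an at most RDP. The starting point is that a finite purely inseparable morphism $\Spec A \to \Spec B$ between at most RDPs is a universal homeomorphism (it is finite, surjective and radicial), hence restricts to a universal homeomorphism between the punctured spectra and so induces an isomorphism on the local fundamental groups. In particular, being connected by a chain of finite purely inseparable morphisms --- possibly running through normal local rings that are not RDPs, such as the ring $C_1$ occurring in the proof of Theorem \ref{thm:smooth covering} --- forces the fundamental groups to be isomorphic; the content is the converse under the stated hypothesis. I would attach to each $B_i$ a normal form $B_i^{\flat}$, namely an at most RDP whose universal covering is smooth, connected to $B_i$ by such a chain and satisfying $\pi_1(B_i^{\flat}) \cong \pi_1(B_i)$; and then show that, in a fixed characteristic, a normal form is determined up to isomorphism by its fundamental group, except in the excluded case $p = 2$ with $\pi_1$ dihedral.

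To produce the normal form I would distinguish cases according to $(p, \Sing(B_i))$. If $(p,\Sing(B_i)) = (2, D_N^r)$ with $4r > N$, then $\pi_1(B_i) \cong \Dih_{(4r-N)'}$ is dihedral (with the convention $\Dih_1 = C_2$), so this situation is excluded by hypothesis and need not be treated. If $(p, \Sing(B_i)) = (2, E_8^1)$, then $\pi_1(B_i) = 0$, and the explicit chain constructed in the proof of Theorem \ref{thm:smooth covering}, followed by the coverings of $D_{11}^{1/2}$ produced there via Theorem \ref{thm:main}, connects $B_i$ by a chain of purely inseparable morphisms to a smooth point; I take $B_i^{\flat}$ to be that smooth point. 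In all remaining cases Proposition \ref{prop:smooth covering etale last} applies and supplies a chain $B_i = B_0 \subset B_1 \subset \dots \subset B_n$ of purely inseparable morphisms of degree $p$ with $B_n$ admitting a smooth universal covering; since each step preserves $\pi_1$, we have $\pi_1(B_n) \cong \pi_1(B_i)$, and I set $B_i^{\flat} = B_n$. (When $\pi_1(B_i) = 0$ one may again take $B_i^{\flat}$ to be a smooth point, using Theorem \ref{thm:smooth covering}; and if $B_i$ is already smooth there is nothing to do.)

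It then remains to prove that, in characteristic $p$, an at most RDP $B^{\flat}$ with smooth universal covering is determined up to isomorphism by $\Gamma := \pi_1(B^{\flat})$, provided $(p, \Gamma) \neq (2, \Dih_n)$. Writing $C \cong k[[x,y]]$ for the universal covering, one has $B^{\flat} = C^{\Gamma}$. If $\Gamma$ is tame, i.e.\ $p \nmid \abs{\Gamma}$, I would linearize the action by averaging to embed $\Gamma \hookrightarrow \GL_2(k)$; since an RDP, and also a smooth point, is Gorenstein, the image lies in $\SL_2(k)$ up to conjugacy, and finite subgroups of $\SL_2(k)$ of order prime to $p$ obey the classical $ADE$ classification, with the conjugacy class determined by the abstract isomorphism type; this recovers $B^{\flat}$ ($C_n \rightsquigarrow A_{n-1}$, $\BinDih_n \rightsquigarrow D_{n+2}$, $\BinTet \rightsquigarrow E_6$, $\BinOct \rightsquigarrow E_7$, $\BinIco \rightsquigarrow E_8$, and the trivial group to a smooth point). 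If $\Gamma$ is wild, i.e.\ $p \mid \abs{\Gamma}$, then by Table \ref{table:pi1} the pair $(p, B^{\flat})$ is one of finitely many sporadic entries, and I would check directly that within each characteristic the groups that occur ($C_5$ for $E_8^1$ when $p=5$; $C_3, C_6, \BinTet$ for $E_6^1, E_7^1, E_8^2$ when $p=3$; $C_6, C_4$ and the metacyclic group of order $12$ for $E_6^1, E_7^3, E_8^4$ when $p=2$) are pairwise non-isomorphic and non-isomorphic to any tame group realized there with smooth universal covering --- here one uses $p \nmid n$ in $C_n \rightsquigarrow A_{n-1}$ and in $\BinDih_n \rightsquigarrow D_{n+2}$ --- while the only further coincidence, the group $C_2 = \Dih_1$ arising both for $E_8^2$ and for $D_N^r$ with $4r = N$ in characteristic $2$, lies inside the excluded family. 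Granting this, $B_1^{\flat} \cong B_2^{\flat}$, and concatenating the two chains (which, since $k$ is algebraically closed, so that $B \cong \pthpower{B}$, can be combined into a single finite purely inseparable morphism) proves the corollary.

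The main obstacle is this last step: the injectivity, in each fixed characteristic, of the assignment taking an at most RDP with smooth universal covering to the isomorphism class of its fundamental group, once the characteristic-$2$ dihedral cases are removed. For tame groups it rests on the classical rigidity of finite subgroups of $\SL_2$ of order prime to the characteristic; for wild groups it is a finite but delicate inspection of Table \ref{table:pi1}, and the coincidences it exposes --- notably several inequivalent RDPs mapping onto $C_2$, and onto dihedral groups more generally, in characteristic $2$ --- are exactly what force the hypothesis $(p, \pi_1) \neq (2, \Dih_n)$.
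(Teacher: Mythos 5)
Your argument is correct and follows essentially the same route as the paper: both reduce each $B_i$, via the chains supplied by Theorem \ref{thm:main}, Theorem \ref{thm:smooth covering} and Proposition \ref{prop:smooth covering etale last}, to a canonical representative of its $\pi_1$-class (an at most RDP with smooth universal covering, or the smooth point itself when $\pi_1=0$) and then check against Table \ref{table:pi1} that this representative is determined by $\pi_1$ once the characteristic-$2$ dihedral cases (including $\Dih_1 = C_2$) are excluded. The only real difference is presentational: you justify the uniqueness of the representative in the tame case by the rigidity of finite subgroups of $\SL_2$ of order prime to $p$, whereas the paper simply reads the relevant coincidences off Table \ref{table:pi1} and lists the connecting morphisms explicitly.
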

Here $\Dih_n$ is the dihedral group (of order $2n$). In particular, $\Dih_1 = \C_2$.
Again, see Table \ref{table:pi1} (due to \cite{Artin:RDP}) for the fundamental groups and the universal coverings of RDPs.

\begin{table} 
	\caption{Etale fundamental groups and the universal coverings of RDPs} \label{table:pi1}
\begin{tabular}{lllll}
\toprule
char & univ.\ cov. & RDP & & $\pi_1$ \\
\midrule
any          & $A_{p^e-1}$ & $A_{n p^e - 1}$ & ($p \notdivides n$) & $\C_{n}$: cyclic (of order $n$) \\
$\neq 2$     & $A_{p^e-1}$ & $D_{n p^e + 2}$ & ($p \notdivides n$) & $\BinDih_{n}$: binary dihedral (of order $4 n$) \\
$\neq 2,3$   & smooth      & $E_6$           &                     & $\BinTet$: binary tetrahedral (of order $24$) \\
$\neq 2,3$   & smooth      & $E_7$           &                     & $\BinOct$: binary octahedral (of order $48$) \\
$\neq 2,3,5$ & smooth      & $E_8$           &                     & $\BinIco$: binary icosahedral (of order $120$) \\
\midrule
$5$ & $E_8^0$         & $E_8^0$   &             & $0$ \\
$5$ & smooth          & $E_8^1$   &             & $\C_5$ \\
\midrule
$3$ & $E_6^0$         & $E_6^0$   &             & $0$ \\
$3$ & smooth          & $E_6^1$   &             & $\C_3$ \\
$3$ & $E_6^0$         & $E_7^0$   &             & $\C_2$ \\
$3$ & smooth          & $E_7^1$   &             & $\C_6$ \\
$3$ & $E_8^r$         & $E_8^r$   & ($r = 0,1$) & $0$ \\
$3$ & smooth          & $E_8^2$   &             & $\BinTet$: binary tetrahedral (of order $24$) \\
\midrule
$2$ & $A_{2^{e+1}-1}$ & $D_{N}^r$ & ($4r > N$)  & $\Dih_{(4r-N)'}$, $4r - N = 2^e (4r - N)'$, \\
&&&& \quad $2 \notdivides (4r-N)'$: \\
&&&& \quad dihedral (of order $2 \cdot (4r-N)'$) \\
$2$ & smooth          & $D_{N}^r$ & ($4r = N$)  & $\C_2$  \\
$2$ & $D_{N}^r$       & $D_{N}^r$ & ($4r < N$)  & $0$  \\
$2$ & $D_4^0$         & $E_6^0$   &             & $\C_3$ \\
$2$ & smooth          & $E_6^1$   &             & $\C_6$ \\
$2$ & $E_7^r$         & $E_7^r$   & ($r=0,1,2$) & $0$ \\
$2$ & smooth          & $E_7^3$   &             & $\C_4$ \\
$2$ & $E_8^r$         & $E_8^r$   & ($r=0,1,3$) & $0$ \\
$2$ & smooth          & $E_8^2$   &             & $\C_2$ \\
$2$ & smooth          & $E_8^4$   &             & metacyclic of order $12$ \\
\bottomrule
\end{tabular}
\end{table}

\begin{proof}
	If $\pietloc = 0$, then both $B_1$ and $B_2$ are connected to $k[[x,y]]$ 
	by Theorem \ref{thm:smooth covering}.

	Suppose $\pietloc \neq 0$.
	Theorem \ref{thm:main} shows that 
	$B_1$ and $B_2$ are connected by a purely inseparable morphism of degree $p$ 
	if $(B_1,B_2)$ is one of the following:
	$(A_{n p^e - 1}, A_{n p^{e'} - 1})$, 
	$(D_{n p^e + 2}, D_{n p^{e'} + 2})$ ($p \neq 2$), 
	$(A_1, E_7^0)$ ($p = 3$),
	$(A_2, E_6^0)$ ($p = 2$).
	According to Table \ref{table:pi1}, 
	general cases (with $(p, \pietloc) \neq (2, \Dih_n)$) follow from these cases.
\end{proof}

Also, in some (not all) cases the order of the \'etale and purely inseparable coverings in Theorem \ref{thm:smooth covering} can be reversed.
\begin{prop} \label{prop:smooth covering etale last}
	Suppose $B$ is as in Theorem \ref{thm:smooth covering}.
	Then there exists 
	a sequence $B = B_0 \subset B_1 \subset \dots \subset B_n \subset C$ ($n \geq 0$) 
	of finite extensions of complete local algebras such that
	\begin{itemize}
		\item $C$ is smooth, and all other $B_i$ are RDPs,
		\item $\Spec C \to \Spec B_n$ is the universal covering, and 
		\item $\Spec B_{i+1} \to \Spec B_{i}$ are purely inseparable of degree $p$, of type (\ref{case:point}) or (\ref{case:non-principal}),
	\end{itemize}
	if and only if $(p, \Sing(B)) \neq (2, E_8^1), (2, D_N^r)$ ($4r > N$).
\end{prop}

\begin{proof}
	Again, this is impossible for $E_8^1$ in characteristic $2$.
	
	Next, if $p = 2$ and $\Sing(B)$ belongs to the set $\set{D_N^r \mid 4r > N}$, 
	then by Theorem \ref{thm:main} so does any RDP that is a purely inseparable covering of $B$ of degree $p$,
	and no member of this family has smooth universal covering.
	
	Suppose $B$ is none of the above.
	We shall describe $\Spec B_0 \from \Spec B_1 \from \dots \from \Spec B_{n}$
	with $B_n$ having smooth universal covering.
	By Theorem \ref{thm:smooth covering}, it remains to consider the cases with $\pietloc(B) \neq 0$.
	
	$A_{n p^e-1} \from A_{n p^{e-1}-1} \from \dots \from A_{n-1}$ ($p \notdivides n$).
	
	$D_{n p^e + 2} \from D_{n p^{e-1} + 2} \from \dots \from D_{n+2}$ ($p \neq 2$, $p \notdivides n$).
	(Here, the corresponding derivations satisfy (\ref{case:non-principal}).)
	
	For $p = 3$: $E_7^0 \from A_1$.
	
	For $p = 2$: $E_6^0 \from A_2$.
\end{proof}

\begin{rem}
In fact, we can show that $\Spec C_2 \to \Spec C_0$ we gave in the case of $E_8^1$ of the proof of Theorem \ref{thm:smooth covering}
is the quotient by an action of the group scheme $\alpha_4$.
The existence of such a quotient morphism will be used in \cite{Liedtke--Martin--Matsumoto:RDPtors}*{Remark 9.32}.

Here $\alpha_{p^e} = \Spec R$, $R = k[\varepsilon] / (\varepsilon^{p^e})$, is the group scheme with comultiplication given by 
$R \to R \otimes R \colon \varepsilon \mapsto \varepsilon \otimes 1 + 1 \otimes \varepsilon$.
Just as $\alpha_p$-actions correspond to derivations of additive type,
$\alpha_{p^e}$-actions can be described by using so-called higher derivations. 
For simplicity, we consider only the case $p^e = 4$.
Giving an $\alpha_4$-action on $\Spec A$ is equivalent to giving two $k$-linear maps $\map{\delta_1, \delta_2}{A}{A}$ satisfying the following properties:
\begin{itemize}
\item $\delta_1^2 = \delta_2^2 = 0$ and $\delta_1 \delta_2 = \delta_2 \delta_1$,
\item $\delta_1$ is a derivation, and
\item $\delta_2$ satisfies $\delta_2(a b) = \delta_2(a) b + \delta_1(a) \delta_1(b) + a \delta_2(b)$.
\end{itemize}
The corresponding action is $A \to R \otimes A \colon a \mapsto \sum_{0 \leq i < 4} \varepsilon^i \otimes \delta_i(a)$,
where $\delta_0 := \id$ and $\delta_3 := \delta_1 \delta_2 = \delta_2 \delta_1$.
The quotient of $\Spec A$ by $\alpha_4$ (resp.\ its subgroup scheme $\alpha_2$)
is the spectrum of the subalgebra $\Ker \delta_1 \cap \Ker \delta_2$ (resp.\ $\Ker \delta_1$).

Now let $\delta_1$ be the one given in the proof 
and define $\map{\delta_2}{C_2}{C_2}$ by
$\delta_2(t, v, x, y, z) = (0, 1, y^2, z, 0)$
and the Leibniz-like formula.
Note that $\delta_2$ extends $\map{\delta'_2}{C_1}{C_1}$ given in the proof.
It is straightforward to check that this $(\delta_1,\delta_2)$ satisfy the required conditions.
\end{rem}

\begin{rem} \label{rem:covering of E_8^1}
The equation of $C_1$ in the case of $E_8^1$ of the proof of Theorem \ref{thm:smooth covering} is, after a coordinate change, 
of the form $x^2 + z^3 + z y^5 + x y^i + (\text{higher})$,
with $i = 6$ if $c \neq 0$ and $i = 7$ if $c = 0$.
In characteristic $0$, 
$k[[x,y,z]] / (x^2 + z^3 + z y^5)$ is an exceptional unimodal singularity usually denoted by the symbol $E_{13}$,
where the index $13$ stands for the Milnor number ($\dim_k k[[x,y,z]]/(F_x, F_y, F_z)$ for $k[[x,y,z]] / (F)$) in characteristic $0$,
although in characteristic $2$ it is not equal to the Milnor number nor the Tjurina number ($\dim_k k[[x,y,z]]/(F, F_x, F_y, F_z)$).
It is straightforward to check that our $C_1$ has Tyurina number $20$ if $c \neq 0$ and $22$ if $c = 0$.
\end{rem}

\subsection*{Acknowledgments}
I thank Hiroyuki Ito, Kentaro Mitsui, and Hisanori Ohashi for helpful comments and discussions.
I thank the anonymous referee for valuable comments and corrections.

\begin{bibdiv}
	\begin{biblist}
\bib{Artin:RDP}{article}{
  author={Artin, Michael},
  title={Coverings of the rational double points in characteristic $p$},
  conference={ title={Complex analysis and algebraic geometry}, },
  book={ publisher={Iwanami Shoten, Tokyo}, },
  date={1977},
  pages={11--22},
}

\bib{Ganong:frobenius}{article}{
  author={Ganong, Richard},
  title={Plane Frobenius sandwiches},
  journal={Proc. Amer. Math. Soc.},
  volume={84},
  date={1982},
  number={4},
  pages={474--478},
  issn={0002-9939},
}

\bib{Liedtke--Martin--Matsumoto:RDPtors}{article}{
  author={Liedtke, Christian},
  author={Martin, Gebhard},
  author={Matsumoto, Yuya},
  title={Torsors over the rational double points in characteristic $p$},
  eprint={https://arxiv.org/abs/2110.03650v1},
label={LMM21},
}

\bib{Lipman:rationalsingularities}{article}{
  author={Lipman, Joseph},
  title={Rational singularities, with applications to algebraic surfaces and unique factorization},
  journal={Inst. Hautes \'Etudes Sci. Publ. Math.},
  number={36},
  date={1969},
  pages={195--279},
}

\bib{Matsumoto:k3mun}{article}{
  author={Matsumoto, Yuya},
  title={On $\mu _{n}$-actions on K3 surfaces in positive characteristic},
  eprint={https://arxiv.org/abs/1710.07158v4},
  label={Mat22},
}

\bib{Matsumoto:k3alphap}{article}{
  author={Matsumoto, Yuya},
  title={$\mu _p$- and $\alpha _p$-actions on K3 surfaces in characteristic $p$},
  eprint={https://arxiv.org/abs/1812.03466v5},
  label={Mat21a},
  journal={J. Algebraic Geom.},
  status={to appear},
}

\bib{Matsumoto:k3rdpht}{article}{
  author={Matsumoto, Yuya},
  title={Inseparable maps on $W_n$-valued Ext groups of non-taut rational double point singularities and the height of K3 surfaces},
  eprint={https://arxiv.org/abs/1907.04686v3},
  label={Mat21b},
}

\bib{Matsumura:commutativeringtheory}{book}{
  author={Matsumura, Hideyuki},
  title={Commutative ring theory},
  series={Cambridge Studies in Advanced Mathematics},
  volume={8},
  edition={2},
  note={Translated from the Japanese by M. Reid},
  publisher={Cambridge University Press, Cambridge},
  date={1989},
  pages={xiv+320},
  isbn={0-521-36764-6},
}

\bib{Rudakov--Shafarevich:inseparable}{article}{
  author={Rudakov, Aleksei Nikolaevich},
  author={Shafarevich, Igor Rostislavovich},
  title={Inseparable morphisms of algebraic surfaces},
  language={Russian},
  journal={Izv. Akad. Nauk SSSR Ser. Mat.},
  volume={40},
  date={1976},
  number={6},
  pages={1269--1307, 1439},
  issn={0373-2436},
  note={English translation: Math. USSR-Izv. {\bf 10} (1976), no. 6, 1205--1237.},
}

	\end{biblist}
\end{bibdiv}

\end{document}